\numberwithin{equation}{section}
\newtheorem{Theorem}{Theorem}[section]
\newtheorem*{Theorem*}{Theorem}
\newtheorem{Corollary}[Theorem]{Corollary}
\newtheorem{Lemma}[Theorem]{Lemma}
\theoremstyle{definition}
\newtheorem{Definition}[Theorem]{Definition}
\newtheorem{Remark}[Theorem]{Remark}
\newtheorem{asp}[Theorem]{Assumption} }
\newcommand{\wick}[1]{\textrm{\textup{{\Large :}}}#1 \textrm{\textup{\Large :}}}
\begin{document}
\allowdisplaybreaks

\newcommand{\arXivNumber}{2104.13751}

\renewcommand{\PaperNumber}{002}

\FirstPageHeading

\ShortArticleName{Voros Coefficients at the Origin and at the Infinity}

\ArticleName{Voros Coefficients at the Origin and at the Infinity\\ of the Generalized Hypergeometric Differential\\ Equations with a Large Parameter}

\Author{Takashi AOKI~$^{\rm a}$ and Shofu UCHIDA~$^{\rm b}$}

\AuthorNameForHeading{T.~Aoki and S.~Uchida}

\Address{$^{\rm a)}$~Department of Mathematics, Kindai University, Higashi--Osaka 577-8502, Japan}
\EmailD{\href{mailto:aoki@math.kindai.ac.jp}{aoki@math.kindai.ac.jp}}

\Address{$^{\rm b)}$~Graduate School of Science and Engineering, Kindai University,\\
\hphantom{$^{\rm b)}$}~Higashi--Osaka 577-8502, Japan}
\EmailD{\href{mailto:1944310104r@kindai.ac.jp}{1944310104r@kindai.ac.jp}}

\ArticleDates{Received July 20, 2021, in final form December 30, 2021; Published online January 03, 2022}

\Abstract{Voros coefficients of the generalized hypergeometric differential equations with a large parameter are defined and their explicit forms are given for the origin and for the infinity. It is shown that they are Borel summable in some specified regions in the space of parameters and their Borel sums in the regions are given.}

\Keywords{exact WKB analysis; Voros coefficients; generalized hypergeometric differential equations}

\Classification{33C20; 34E20; 34M60}

\section{Introduction}

The notion of the Voros coefficients is one of the keys in the exact WKB analysis of
differential equations with a large parameter. It has been introduced mainly for
second-order ordinary differential equations and used effectively
in the descriptions of the parametric Stokes phenomena, of calculation of the monodromies and of the relations between Borel resummed WKB solutions and
classical special functions \cite{ATT,ATT2,ATT4,ATT3,AT,DDP,IKo,KoT,SS,T,V}.
Recently, new insights into the relationship between cluster algebra and Voros coefficients have been obtained in~\cite{IN}.
Furthermore, it is known that Voros coefficient of the Gauss hypergeometric differential equation can be described using the free energy in the topological recursion~\cite{ IKoYu}.
Besides, the Voros coefficients have also appeared in the physics literature as the spectral coordinates~\cite{HK}.

Our purpose is to define the Voros coefficients at the origin and at the infinity
for the generalized hypergeometric differential equation with a large parameter and to give the explicit forms of them.
Here the generalized hypergeometric differential equation means the differential equation which characterizes the generalized hypergeometric series (or function) ${}_{N}F_{N-1}$.
It is well known \cite{EMOT} that this equation is of order $N$ and obtained as a natural generalization of the Gauss hypergeometric differential equation.
The monodromy group of the generalized hypergeometric differential equation plays a role in studying the solution of the equation.
The classification of the finite hypergeometric groups \cite{L} and the differential Galois group of the generalized hypergeometric equation have been obtained in~\cite{BH}.

Basic notions and tools in the exact WKB analysis of higher-order or of infinite-order differential equations with a large parameter are established in \cite{AKKoT}. Our discussions are based on the theory developed in this paper.
For example, WKB solutions of a higher-order differential equation are defined
by using the characteristic roots of the equation, which are branches of an algebraic function.
Once WKB solutions are introduced, one can define the Voros coefficients in a similar manner to the second-order case.
Fortunately, our equation has ladder operators for parameters as in the case of the Gauss hypergeometric differential equation.
Therefore the basic idea of computation of the explicit forms of the Voros coefficients is the same as the second-order case~\cite{ATT3,AT,KoT,T}.
But in our case, we have to consider the integral of the algebraic function.
Although this fact causes some difficulties, some of them can be overcomed by using an idea invented by Iwaki and Koike~\cite{IKo}.
There is another difficulty, or complexity, coming from the number of parameters. Our equation contains $(2N-1)$ parameters and we have to manage some combinations of them. Thus our descriptions become somewhat complicated.

The plan of this article is as follows. In Section~\ref{section2}, we define WKB solutions of the
generalized hypergeometric differential equation with a large parameter and
investigate the local behavior of the solutions at the singularities of the equation.
In Section~\ref{section3}, we define the Voros coefficients at the origin and at the infinity for our equation. The explicit forms of the Voros coefficients are given (Theorem~\ref{thm1}).

\section{WKB solutions}\label{section2}
\subsection[The generalized hypergeometric differential equation with a large parameter]{The generalized hypergeometric differential equation\\ with a large parameter}

Let $N$ be an integer greater than $1$,
let $\bm{a}$ denote an $N$-tuple of parameters $a_i \in \mathbb{C}$, $i=1,2,\dots,N$,) and let $\bm{b}$ denote an $(N-1)$-tuple of parameters $b_j \in \mathbb{C}\setminus \{0,-1,-2, \dots\}$, $j=1,2,\dots,N-1$. We set
\begin{gather}\label{NFN-1}
{}_NF_{N-1}\left(\begin{matrix}\bm{a} \\ \bm{b}\end{matrix};x \right)=\sum_{k=0}^{\infty} \frac{(a_1)_k (a_2)_k\cdots (a_N)_k}{(b_1)_k (b_2)_k\cdots (b_{N-1})_k k !} x^k
\end{gather}
and call this the generalized hypergeometric series~\cite{EMOT}. Here $(\alpha)_k$ denotes the Pochhammer symbol $(={\Gamma(\alpha+k)}/{\Gamma(\alpha)})$. As is well known, the radius of convergence of \eqref{NFN-1} equals $1$ and the right-hand side of \eqref{NFN-1} defines a holomorphic function on the universal covering of $\mathbb{C}\setminus \{0,1\}$, which is also denoted by ${}_NF_{N-1}$.
This series or function satisfy the following $N$-th order ordinary differential equation:
\begin{gather}\label{NFN-1eq}
\left(\left(\prod_{j=1}^{N-1}(\vartheta_x+b_j)\right) \partial_x- \left(\prod_{i=1}^{N}(\vartheta_x+a_i) \right)\right)w=0.
\end{gather}
Here we set $\partial_x=\frac{{\rm d}}{{\rm d}x}$ and $\vartheta_x=x\frac{{\rm d}}{{\rm d}x}$.
We call \eqref{NFN-1eq} the generalized hypergeometric differential equation.
This equation has regular singular points at $x=0,1,\infty$.
We set $b_N=1$, $\tilde{\bm{b}}=(b_1, b_2,\dots,b_{N-1}, 1)$ and $\bm{1}=(1,1,1,\dots,1)$.
We suppose that $a_i-a_j, b_i-b_j \not\in \mathbb{Z}$, $1\leq i, j\leq N$, $i\not=j$, and $\sum_{i=1}^{N}{(b_i-a_i)}\not\in\mathbb{Z}$. Then there exists a fundamental system of solutions around the singular point $\big\{ w_1^{[\varrho]},\dots ,w_N^{[\varrho]}\big\}$ of~\eqref{NFN-1eq}. If $\varrho=0,\infty$, they are given by
\begin{gather*}
w_{1}^{[0]}={}_NF_{N-1}\left(\begin{matrix}\bm{a} \\ \bm{b}\end{matrix};x \right),\\
w_{j+1}^{[0]}=x^{1-b_j} {}_NF_{N-1}\left(\begin{matrix}\bm{a}+(1-b_j)\bm{1} \\ \big( \tilde{\bm{b}}+(1-b_j)\bm{1} \big)^{\lor} \end{matrix};x \right),\\
w_{i}^{[\infty]}=(-x)^{-a_i} {}_NF_{N-1}\left(\begin{matrix}(1+a_i)\bm{1}-\tilde{\bm{b}}\\ ((1-a_{i})\bm{1}+\bm{a})^{\lor}\end{matrix};\frac{1}{x} \right)
\end{gather*}
for $j=1,\dots, N-1$, $i=1,\dots, N$, where $\ast^{\lor}$ indicates that the entry $1+b_{j}-b_{j}$ or $1-a_{i}+a_{i}$ omitted in~$\ast$~\cite{O}.
Connection problem of~\eqref{NFN-1eq} between $x=0$ and $x=\infty$ had already been studied well~\cite{B,KN,S}.

We introduce a positive large parameter $\eta$ in $\bm{a}$ and $\bm{b}$ by setting
\begin{gather*}
a_i={a_{i,0}}+a_{i,1}{\eta},\qquad i=1,2,\dots,N,\\
b_j={b_{j,0}}+b_{j,1}{\eta},\qquad j=1,2,\dots,N-1,
\end{gather*}
where $a_{i,k}, b_{j,k}\in\mathbb{C}$ for $k=0,1$. We consider the following equation which contains the large parameter:
\begin{gather} \label{nFn-1eq}
{}_N P_{N-1}\psi =0,
\end{gather}
where we set
\begin{gather}\label{nPn-1}
{}_N P_{N-1}=
\eta^{-N}\left(\left(\prod_{j=1}^{N-1}(\vartheta_x+b_j)\right) \partial_x- \left(\prod_{i=1}^{N}(\vartheta_x+a_i) \right)\right).
\end{gather}
We call \eqref{nPn-1} the generalized hypergeometric differential equation with the large parameter. The singular points of the equation \eqref{nFn-1eq} (or \eqref{NFN-1eq}) are $0$, $1$ and $\infty$.

The differential operator \eqref{nPn-1} is a WKB type operator defined on $\mathbb{C}$ in the sense of \cite[Definition~2.1]{AKKoT}.
Let $\xi$ denote the dual variable of~$x$. Then~$\xi$ is regarded as the symbol of the differential operator $\partial_{x}$ (cf.~\cite{A,AKKoT2, AKKoT}). We introduce a new variable $\zeta$ by
setting $\zeta=\eta^{-1}\xi$. Since the large parameter $\eta$ is regarded as the dual variable of the variable~$y$ of the Borel plane \cite{KT}, $\eta$ indicates the symbol of $\partial_{y}=\partial/\partial y$. Hence $\zeta$ designates the symbol of the microdifferential operator $\partial_{y}^{-1}\partial_{x}$.
Then the total symbol $\sigma({}_N P_{N-1})(x,\zeta,\eta)$ in the sense of \cite{AKKoT} can be considered as a~function $x$, $\zeta$ and $\eta^{-1}$.
We write $\sigma_k({}_N P_{N-1})(x,\zeta)$ the coefficient of $\eta^{-k}$ of $\sigma({}_N P_{N-1})(x,\zeta,\eta)$. Then the total symbol can be expressed in the form
\begin{gather}\label{2:(2.1)}
\sigma({}_N P_{N-1})(x,\zeta,\eta)=\sum_{k=0}^{N} \eta^{-k} \sigma_k({}_N P_{N-1})(x,\zeta).
\end{gather}
We call the leading term $\sigma_0({}_N P_{N-1})(x,\zeta)$ of~\eqref{2:(2.1)}
the principal symbol of ${}_N P_{N-1}$.
For an $m$-tuple $\bm{c}=(c_1,c_2,\dots,c_m)$ of parameters, we denote by $\mathfrak{s}_{\ell}(\bm{c})$ the elementary symmetric polynomial of degree $\ell$ of $c_1,c_2,\dots,c_m$. If $\ell> m$ or $\ell \leq-1$, we set $\mathfrak{s}_{\ell}(\bm{c})=0$.
\begin{Lemma}
The total symbol $\sigma({}_N P_{N-1})(x,\zeta,\eta)$ is written in the form{\rm:}
\begin{gather}
\sigma({}_N P_{N-1})(x,\zeta,\eta)\nonumber\\
\qquad{} =
\sum_{k=1}^N\left( \sum_{j=k}^N \!\left( \left\{{j-1\atop k-1}\right\}\mathfrak{s}_{N-j}({\bm b})-\left\{{j\atop k}\right\} \mathfrak{s}_{N-j}({\bm a})x \right)\!\right) x^{k-1}\eta^{-N+k} \zeta^k -\eta^{-N}\mathfrak{s}_N({\bm a}).\!\!\!\!\label{spPq}
\end{gather}
Here $\left\{{j\atop k}\right\}$ denotes the Stirling number of the second kind {\rm \cite{GKP,O}}.
\end{Lemma}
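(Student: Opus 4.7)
The plan is to compute the total symbol by the shortest route: expand each factor of ${}_NP_{N-1}$ into a polynomial in $\vartheta_x$, convert every $\vartheta_x^n$ into a polynomial in $x$ and $\partial_x$ via the Stirling-number identity, and then read off the symbol using $\sigma(\partial_x)=\xi=\eta\zeta$.

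First I would expand
\[
\prod_{j=1}^{N-1}(\vartheta_x+b_j)=\sum_{j=1}^{N}\mathfrak{s}_{N-j}(\bm b)\,\vartheta_x^{j-1},\qquad
\prod_{i=1}^{N}(\vartheta_x+a_i)=\sum_{j=0}^{N}\mathfrak{s}_{N-j}(\bm a)\,\vartheta_x^{j},
\]
which is the definition of elementary symmetric polynomials. The reindexing $j\to j-1$ on the $\bm b$-side is what will eventually produce the $\left\{{j-1\atop k-1}\right\}$ appearing in the claim. Next I would invoke the classical identity $\vartheta_x^{n}=\sum_{k=0}^{n}\left\{{n\atop k}\right\}x^{k}\partial_x^{k}$ on each power of $\vartheta_x$. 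Because $\vartheta_x^{j-1}\partial_x=\sum_{k}\left\{{j-1\atop k}\right\}x^{k}\partial_x^{k+1}$, it is natural to reindex via $k\mapsto k-1$, so that both the $\bm a$-part and the $\bm b$-part are displayed in terms of $x^{k-1}\partial_x^{k}$ (with an extra factor of $x$ appearing on the $\bm a$-side, which accounts for the $x$ multiplying $\mathfrak{s}_{N-j}(\bm a)$ in the final formula).

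Taking symbols is now trivial, since $x^{m}\partial_x^{k}$ is already a normal-ordered differential operator and its total symbol is $x^{m}\xi^{k}=x^{m}\eta^{k}\zeta^{k}$. Substituting, swapping the order of summation $\sum_{j}\sum_{k}\rightarrow\sum_{k}\sum_{j\ge k}$, and finally multiplying by the overall $\eta^{-N}$ prefactor from \eqref{nPn-1} produces the stated expression, provided one isolates the $k=0$ term on the $\bm a$-side: there $\left\{{j\atop 0}\right\}=\delta_{j,0}$ kills every contribution except $j=0$, which supplies the residual term $-\eta^{-N}\mathfrak{s}_{N}(\bm a)$ sitting outside the double sum.

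There is no real obstacle; the content is purely algebraic bookkeeping. The only place to be careful is the asymmetric index shift between the two products (the $\partial_x$ on the $\bm b$-side raises the $\partial_x$-power by one, which is precisely what turns $\left\{{j\atop k}\right\}$ into $\left\{{j-1\atop k-1}\right\}$ and moves the factor of $x$ to the $\bm a$-side). Checking boundary cases, namely that $\mathfrak{s}_{N-j}(\bm b)$ automatically vanishes when $N-j=N$ (consistent with $\bm b$ having only $N-1$ entries), and that the convention $\mathfrak{s}_{\ell}=0$ for $\ell<0$ makes the inner sum well-defined, finishes the verification.
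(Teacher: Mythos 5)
Your proof is correct and follows essentially the same route as the paper: expand both products in powers of $\vartheta_x$ with elementary symmetric polynomial coefficients, apply $\vartheta_x^j=\sum_k\left\{{j\atop k}\right\}x^k\partial_x^k$, reindex, and read off the symbol, with the $k=0$ term of the $\bm a$-side supplying $-\eta^{-N}\mathfrak{s}_N(\bm a)$. The paper's own proof is just a terser version of this same bookkeeping.
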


\begin{proof}By the definition of the Stirling number of the second kind, the $j$-th power of the Euler operator $\vartheta_x$ becomes
\begin{gather}\label{eulerst}
 \vartheta_x^j=\sum_{k=0}^j\left\{{j\atop k}\right\}x^k \partial_x^k, \qquad j\geq 0.
\end{gather}
Expanding the right-hand side of \eqref{nPn-1} and using \eqref{eulerst}, we have
\begin{align*}
\eta^{N} {}_NP_{N-1}
&=\sum_{\substack{0\leq j \leq N-1 \\ 0\leq k \leq j}} \left\{{j\atop k}\right\} \mathfrak{s}_{N-1-j}({\bm b}) x^k \partial_x^{k+1}-\sum_{\substack{0\leq j \leq N \\ 0\leq k \leq j}} \left\{{j\atop k}\right\} \mathfrak{s}_{N-j}({\bm a}) x^k \partial_x^k.
\end{align*}
Hence, we obtain~\eqref{spPq}.
\end{proof}

\begin{Remark}
The principal symbol $\sigma_0 ({}_N P_{N-1})(x,\zeta)$ is written in the form:
\begin{align}
\sigma_0 ({}_N P_{N-1})(x,\zeta)&=\zeta \prod_{j=1}^{N-1}(x \zeta+b_{j,1})- \prod_{i=1}^{N}(x \zeta+a_{i,1}) \notag\\
&=\sum_{k=1}^N \big(\mathfrak{s}_{N-k}({\bm b}_{1})-\mathfrak{s}_{N-k}({\bm a}_{1})x\big)x^{k-1}\zeta^{k}-\mathfrak{s}_N({\bm a}_{1}).\label{P0prod}
\end{align}
Here we set ${\bm a}_{1}=(a_{1,1},a_{2,1},\dots, a_{N,1})$ and ${\bm b}_{1}=(b_{1,1},b_{2,1},\dots, b_{N-1,1})$.
\end{Remark}

\subsection{Turning points and WKB solutions}

A point $x_\ast \in \mathbb{C}$ is called a turning point of ${}_N P_{N-1}$ with the characteristic value $\zeta_\ast$ if
\begin{gather*}
\sigma_0({}_N P_{N-1}) (x_{*}, \zeta_{*} )=\partial_{\zeta} \sigma_0({}_N P_{N-1}) (x_{*}, \zeta_{*} )=0
\end{gather*}
and $\sigma_0({}_N P_{N-1}) (x, \zeta )$ does not vanish identically as a function~$\zeta$
(see \cite[Definition~3.3]{AKKoT}).
The turning point $x_\ast$ with the characteristic value $\zeta_\ast$ is said to be simple if
\begin{gather*}
\partial_{x} \sigma_0({}_N P_{N-1}) (x_{*}, \zeta_{*} )\not=0,\qquad
\partial_{\zeta}^2 \sigma_0({}_N P_{N-1}) (x_{*}, \zeta_{*} )\not=0
\end{gather*}
(see \cite[Definition~3.6]{AKKoT}).

Let $\mathcal{P}({}_N P_{N-1})$ denote the set
\begin{gather*}
\{(x_\ast, \zeta_\ast) \,|\, \sigma_0({}_N P_{N-1}) (x_{*}, \zeta_{*} )=\partial_{\zeta} \sigma_0({}_N P_{N-1}) (x_{*}, \zeta_{*} )=0 \}.
\end{gather*}
This set can be regarded as a subset of $\mathbb{P}^2_{\mathbb{C}}$.
Let $\mathcal{P}_{\rm tp}({}_N P_{N-1})$ be the projection of $\mathcal{P}({}_N P_{N-1})$ to the $x$-space. Note that the singular points $0$, $1$, $\infty$ do not belong to $\mathcal{P}_{\rm tp}({}_N P_{N-1})$ for generic ${\bm a}_1$ and ${\bm b}_1$.
By the definition, the turning points of \eqref{nFn-1eq} should satisfy the following equation:
\begin{gather}\label{2:2.20}
\operatorname{res}_\zeta \left(\sigma_0({}_NP_{N-1})(x,\zeta), \partial_\zeta \sigma_0({}_NP_{N-1})(x,\zeta) \right)=0.
\end{gather}
Here $\operatorname{res}_\zeta (F, G)$ denotes the resultant of $F$ and $G$ with respect to $\zeta$.

\begin{Lemma}\label{slmm}
The resultant \eqref{2:2.20} is rewritten in the form
\begin{gather}
\operatorname{res}_\zeta \big(\sigma_0({}_NP_{N-1})(x,\zeta), \partial_\zeta \sigma_0({}_NP_{N-1})(x,\zeta) \big)\nonumber\\
\qquad{} =\frac{(-1)^{N-1}}{N^{N-2}} x^{(N-1)^2}(1-x)\operatorname{res}_\zeta \big(f(x,\zeta),g(x,\zeta)\big).\label{tpreseq}
\end{gather}
Here we set
\begin{gather}\label{tpf}
f(x,\zeta)=\sum_{k=0}^{N-1} (k+1) (\mathfrak{s}_{N-k-1}({\bm b}_1)-\mathfrak{s}_{N-k-1}({\bm a}_1)x) \zeta^k,\\
\label{tpg}
g(x,\zeta)=\sum_{k=1}^{N-1} (N-k) (\mathfrak{s}_{N-k}({\bm b}_1)-\mathfrak{s}_{N-k}({\bm a}_1)x) \zeta^k -N\mathfrak{s}_N({\bm a}_{1})x.
\end{gather}
\end{Lemma}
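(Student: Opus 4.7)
The plan is to introduce the auxiliary polynomial
\[
P(u) := u\prod_{j=1}^{N-1}(u+b_{j,1}) - x\prod_{i=1}^{N}(u+a_{i,1}),
\]
a degree-$N$ polynomial in $u$ with leading coefficient $1-x$, and to carry out everything in the variable $u=x\zeta$. The product formula in \eqref{P0prod} yields $x\sigma_{0}({}_{N}P_{N-1})(x,\zeta) = P(x\zeta)$, whence $\partial_{\zeta}\sigma_{0}({}_{N}P_{N-1})(x,\zeta) = P'(x\zeta)$. A direct comparison of coefficients with \eqref{tpf} and \eqref{tpg} would then give the crucial identifications $f(x,u) = P'(u)$ and $g(x,u) = NP(u) - uP'(u)$; the latter follows because the $u^{N}$ terms in $NP$ and $uP'$ cancel (both equal $N(1-x)u^{N}$), leaving precisely the expansion in \eqref{tpg}.

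Next, I would carry out the change of variable $u = x\zeta$ at the level of resultants. The scaling identity $\operatorname{res}_{\zeta}(A(x\zeta),B(x\zeta)) = x^{(\deg A)(\deg B)}\operatorname{res}_{u}(A,B)$ together with $\operatorname{res}(cF,G) = c^{\deg G}\operatorname{res}(F,G)$, applied to $\sigma_{0} = x^{-1}P(x\zeta)$ (of $\zeta$-degree $N$) and $\partial_{\zeta}\sigma_{0} = P'(x\zeta)$ (of $\zeta$-degree $N-1$), yields
\[
\operatorname{res}_{\zeta}(\sigma_{0},\partial_{\zeta}\sigma_{0}) = x^{-(N-1)}\cdot x^{N(N-1)}\operatorname{res}_{u}(P,P') = x^{(N-1)^{2}}\operatorname{res}_{u}(P,P'),
\]
which produces the factor $x^{(N-1)^{2}}$ on the right-hand side. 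Finally, to rewrite $\operatorname{res}_{u}(P,P') = \operatorname{res}_{u}(P,f)$ in terms of $\operatorname{res}_{u}(f,g)$, I would exploit the identity $g = NP - uf$: at every root $\alpha$ of $f$, one has $g(\alpha) = NP(\alpha)$, so Poisson's formula $\operatorname{res}(F,G) = \mathrm{lc}(F)^{\deg G}\prod_{\alpha}G(\alpha)$ applied to both $\operatorname{res}_{u}(f,g)$ and $\operatorname{res}_{u}(f,P)$ (with $\deg_{u}f = \deg_{u}g = N-1$, $\deg_{u}P = N$, and $\mathrm{lc}(f) = N(1-x)$) produces
\[
\operatorname{res}_{u}(f,g) = \frac{N^{N-1}}{\mathrm{lc}(f)}\,\operatorname{res}_{u}(f,P) = \frac{N^{N-2}}{1-x}\,\operatorname{res}_{u}(f,P),
\]
which delivers the $N^{N-2}$ and $(1-x)$ factors and, combined with the previous display, establishes the claimed formula.

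The main obstacle will be the combinatorial bookkeeping: verifying the key identity $g = NP - uP'$ from the elementary-symmetric-polynomial expressions for $f$, $g$, and $P$, and then tracking all leading coefficients and $x$-exponents through the change of variable. The overall sign $(-1)^{N-1}$ in the statement can be traced to the symmetry $\operatorname{res}(F,G) = (-1)^{(\deg F)(\deg G)}\operatorname{res}(G,F)$ applied to the pair $(f,g)$ of common degree $N-1$, which gives $(-1)^{(N-1)^{2}}$, an expression of the same parity as $N-1$.
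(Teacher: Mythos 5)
Your route is genuinely different from the paper's. The paper writes out the Sylvester determinant for $N=3$ and performs explicit row and column eliminations to peel off the factors $x^{4}$ and $(1-x)$, then asserts that the general case is ``similar''; you instead pass to $u=x\zeta$, observe that $x\sigma_{0}=P(x\zeta)$ with $P(u)=u\prod_{j}(u+b_{j,1})-x\prod_{i}(u+a_{i,1})$, identify $f=P'$ and $g=NP-uP'$ (both identifications check out against \eqref{tpf} and \eqref{tpg}), and then use the scaling law for resultants together with Poisson's formula. This gives a clean, uniform proof for all $N$ and actually explains where each factor $x^{(N-1)^{2}}$, $(1-x)$ and $N^{N-2}$ comes from, which the paper's determinant manipulation does not. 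Your intermediate steps are all correct: $\operatorname{res}_{\zeta}(\sigma_{0},\partial_{\zeta}\sigma_{0})=x^{(N-1)^{2}}\operatorname{res}_{u}(P,P')$, and $\operatorname{res}_{u}(f,g)=N^{N-1}\operatorname{lc}(f)^{-1}\operatorname{res}_{u}(f,P)$ with $\operatorname{lc}(f)=N(1-x)$.

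The one genuine flaw is the last paragraph. Your chain of identities never swaps the arguments of $\operatorname{res}(f,g)$, so the symmetry $\operatorname{res}(F,G)=(-1)^{\deg F\deg G}\operatorname{res}(G,F)$ has no place to enter; the only swap you use is $\operatorname{res}_{u}(P,f)=(-1)^{N(N-1)}\operatorname{res}_{u}(f,P)=\operatorname{res}_{u}(f,P)$, which contributes $+1$. The honest output of your argument is therefore
\begin{gather*}
\operatorname{res}_{\zeta}\big(\sigma_{0},\partial_{\zeta}\sigma_{0}\big)=\frac{1}{N^{N-2}}\,x^{(N-1)^{2}}(1-x)\operatorname{res}_{\zeta}(f,g),
\end{gather*}
with no $(-1)^{N-1}$. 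This agrees with the stated lemma exactly when $N$ is odd (in particular for the case $N=3$ that the paper verifies), but disagrees for even $N$: a direct computation at $N=2$, where $\operatorname{res}_{\zeta}(\sigma_{0},\partial_{\zeta}\sigma_{0})=-x(1-x)\big(q_{1}^{2}-4x(1-x)q_{0}\big)$ and $\operatorname{res}_{\zeta}(f,g)=-\big(q_{1}^{2}-4x(1-x)q_{0}\big)$, confirms your signless constant and not the stated one. You should state this discrepancy rather than manufacture the sign; note that it is immaterial for everything the lemma is used for (only the vanishing locus of the resultant matters in Lemma~\ref{tpnum}), but as written your final step is not a valid deduction.
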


\begin{proof}
For the sake of simplicity, we show \eqref{tpreseq} for
the case $N=3$. General case can be proved in a similar way. For $N=3$, the principal symbol
is written in the form
\begin{gather*}
\sigma_0({}_3P_{2})(x,\zeta)=x^2q_3\zeta^3+x q_2\zeta^2+q_1\zeta+q_0,
\end{gather*}
Hence the left-hand side of \eqref{tpreseq} has the form
\begin{gather}\label{det1}
\left|
\begin{matrix}
q_3x^2& q_2x&q_1&q_0&0\\
0&q_3x^2& q_2x&q_1&q_0\\
3q_3x^2&2q_2 x&q_1&0&0\\
0&3q_3x^2&2q_2 x&q_1&0\\
0&0&3q_3x^2&2q_2 x&q_1\\
\end{matrix}
\right|.
\end{gather}
Firstly we eliminate the $(3,1)$-element by using the first row. Next we eliminate the
$(4,2)$-element by the second row. Then, expanding the determinant by the first column, we have
\begin{gather*}
(-1)^{2}q_3x^2
\left|
\begin{matrix}
q_3x^2& q_2x&q_1&q_0\\
q_2 x&2q_1&3q_0&0\\
0&q_2 x&2q_1&3q_0\\
0&3q_3x^2&2q_2 x&q_1\\
\end{matrix}
\right|.
\end{gather*}
We eliminate the $(1,4)$-element by using the third row and multiply the first row by~$3$. Then,
factoring~$x$ out from the first column, multiplying the third and the fourth columns by~$x$ and~$x^{2}$,
respectively, exchanging the second row and the fourth row and finally, exchanging the third row and the fourth row,
 we have
\begin{gather*}
\frac{(-1)^{2}}{3} q_3
\left|
\begin{matrix}
3q_3x& 2q_2x&q_1x&0\\
0&3q_3 x^{2}&2q_2x^{2}&q_1x^{2}\\
q_2 &2q_1&3q_0x&0\\
0&q_2x&2q_1 x&3q_0 x^{2}\\
\end{matrix}
\right|.
\end{gather*}
We factor $x$, $x^{2}$ and $x$ out from the first, the second and the fourth row, respectively.
Then we obtain the expression of~\eqref{det1} of the form
\begin{gather*}
\frac{(-1)^2}{3}q_3x^{4}
\left|
\begin{matrix}
3q_3&2q_2& q_1&0\\
0&3q_3&2q_2 &q_1\\
q_2 &2q_1&3q_0x&0\\
0&q_2 &2q_1&3q_0x\\
\end{matrix}
\right|.
\end{gather*}
Thus we have
\begin{gather*}
\operatorname{res}_\zeta(\sigma_0({}_3P_{2})(x,\zeta), \partial_\zeta \sigma_0({}_3P_{2})(x,\zeta) )=\frac{(-1)^2}{3}(1-x)x^{4}\operatorname{res}_\zeta \left(f(x,\zeta),g(x,\zeta) \right)
\end{gather*}
with
\begin{gather*}
f(x,\zeta)=3q_3\zeta^2+2q_2\zeta+ q_1,\\
g(x,\zeta)=q_2 \zeta^2+2q_1\zeta+3q_0x.
\end{gather*}
This proves Lemma~\ref{slmm} for $N=3$.
\end{proof}
We assume that the following conditions are satisfied.

\begin{asp}\label{asptp}\quad
\begin{itemize}\itemsep0pt
\item[(i)]
All turning points of the equation \eqref{nFn-1eq} are simple.
\item[(ii)] $\mathfrak{s}_{1}({\bm a}_1)\not=\mathfrak{s}_{1}({\bm b}_1)$.
\item[(iii)]
The leading coefficient of $\operatorname{res}_\zeta \left(f(x,\zeta),g(x,\zeta)\right)$ with respect to $x$ does not vanish.
\item[(iv)]
$
{\rm{Dis}}_x \left( \operatorname{res}_\zeta \left(f(x,\zeta),g(x,\zeta)\right)\right)\not=0.
$ Here ${\rm{Dis}}_x (F)$ denotes the discriminant of $F$ with respect to $x$.
\end{itemize}
\end{asp}
\begin{Lemma}\label{tpnum}
The number of elements in the set $\mathcal{P}({}_N P_{N-1})$ equals $2(N-1)$.
\end{Lemma}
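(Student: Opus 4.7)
The plan is to count $|\mathcal{P}({}_N P_{N-1})|$ by reducing, via the factorization of Lemma~\ref{slmm}, to the roots of $\operatorname{res}_\zeta(f,g)$ in~$x$, and then checking that the projection $\mathcal{P}\to\mathcal{P}_{\rm tp}$ is a bijection.

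First I would bound $\deg_x \operatorname{res}_\zeta(f(x,\zeta),g(x,\zeta))$. From \eqref{tpf} and \eqref{tpg}, each of $f$, $g$ is a polynomial in~$\zeta$ of degree $N-1$ whose coefficients are polynomials in~$x$ of degree at most~$1$. The associated Sylvester matrix (with respect to~$\zeta$) is therefore a $(2N-2)\times(2N-2)$ matrix with entries linear in~$x$, so its determinant has degree at most $2(N-1)$ in~$x$. Assumption~\ref{asptp}(iii) forces the leading coefficient to be non-zero, so this degree equals exactly $2(N-1)$, and Assumption~\ref{asptp}(iv) forces the $2(N-1)$ roots to be pairwise distinct.

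Next I would identify these roots with $\mathcal{P}_{\rm tp}({}_NP_{N-1})$. The prefactor $x^{(N-1)^2}(1-x)$ in \eqref{tpreseq} is explained by the degree drop of $\sigma_0$ in~$\zeta$ at $x=0$ and $x=1$ (visible in the coefficient $x^{N-1}(1-x)$ of $\zeta^N$ read off from \eqref{P0prod}); by the genericity noted just before Assumption~\ref{asptp}, the singular points $0$, $1$, $\infty$ do not contribute common zeros of $\sigma_0$ and $\partial_\zeta\sigma_0$. Hence $|\mathcal{P}_{\rm tp}|=2(N-1)$.

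Finally I would check that the projection $\mathcal{P}\to\mathcal{P}_{\rm tp}$ is a bijection, and that no point of $\mathcal{P}$ lies at infinity in $\mathbb{P}^2_{\mathbb{C}}$. A simple turning point $(x_*,\zeta_*)$ contributes a simple zero to $\operatorname{res}_\zeta(\sigma_0,\partial_\zeta\sigma_0)$: locally $\sigma_0$ has the Morse-type form $c(\zeta-\zeta_*)^2+d(x-x_*)+\cdots$ with $c,d\neq 0$ by Assumption~\ref{asptp}(i), and a direct expansion of the two merging $\zeta$-roots shows that the product of the values of $\partial_\zeta\sigma_0$ at these roots vanishes to first order at $x_*$. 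If two distinct simple points of $\mathcal{P}$ were to lie over the same $x_*$, the multiplicity of $x_*$ in $\operatorname{res}_\zeta(\sigma_0,\partial_\zeta\sigma_0)$ would be at least~$2$, and hence also in $\operatorname{res}_\zeta(f,g)$ by~\eqref{tpreseq}, contradicting~(iv); meanwhile (iii) rules out $x=\infty$, and the non-vanishing of $x^{N-1}(1-x)$ on $\mathcal{P}_{\rm tp}$ prevents $\zeta=\infty$. The main obstacle will be this last uniqueness step — turning Assumption~\ref{asptp}(i) into the precise multiplicity statement about the resultant — since the degree count is routine Sylvester-matrix bookkeeping.
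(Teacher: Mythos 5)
Your argument is correct, but it runs the elimination in the opposite direction from the paper's. The published proof also reduces, via Lemma~\ref{slmm}, to the common zeros of $f$ and $g$, but then eliminates $x$ rather than $\zeta$: because the coefficients in \eqref{tpf} and \eqref{tpg} are linear in $x$, elimination is a $2\times2$ determinant yielding a single polynomial $h(\zeta)$ of degree $2(N-1)$, whose top coefficient is controlled by Assumption~\ref{asptp}(ii) (namely $\mathfrak{s}_1({\bm a}_1)\neq\mathfrak{s}_1({\bm b}_1)$) and whose roots are shown to be distinct using the remaining parts of the Assumption. That choice makes the fiber question disappear: each root $\zeta_*$ of $h$ determines $x$ uniquely from the linear system, so the points of $\mathcal{P}({}_NP_{N-1})$ are counted directly. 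Your dual route --- a $(2N-2)\times(2N-2)$ Sylvester determinant in $x$, with (iii) fixing the degree at $2(N-1)$ and (iv) giving squarefreeness --- counts $\mathcal{P}_{\rm tp}({}_NP_{N-1})$ instead, so it genuinely needs your extra step that a simple turning point contributes a simple zero of $\operatorname{res}_\zeta(\sigma_0,\partial_\zeta\sigma_0)$, hence of $\operatorname{res}_\zeta(f,g)$ by \eqref{tpreseq}; that local Morse-type computation is right and is the new content of your version. If you write it up, make explicit that (iii) must be read as ``the coefficient of $x^{2(N-1)}$ does not vanish'' (otherwise it says nothing about the degree), and that for generic parameters the roots of $\operatorname{res}_\zeta(f,g)$ avoid $x=0,1$, so the nonvanishing of the $\zeta^N$-coefficient $x^{N-1}(1-x)$ visible in \eqref{P0prod} converts vanishing of the resultant into an actual finite common root.
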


\begin{proof}
From Lemma \ref{slmm}, a point $(x,\zeta)$ belongs to $\mathcal{P}({}_N P_{N-1})$ if and only if $(x,\zeta)$ satisfies $f(x,\zeta)=0$ and $g(x,\zeta)=0$. Here $f(x,\zeta)$ and $g(x,\zeta)$ denote \eqref{tpf} and \eqref{tpg}, respectively.
Eliminating $x$ from $f(x,\zeta)=0$ and $g(x,\zeta)=0$, we get the equation $h(\zeta)=0$ for $\zeta$, where
\begin{gather*}
h(\zeta)= \sum_{k=N}^{2(N-1)}\Bigg(\sum_{j=1}^{2N-k}
j(N-k-j+1)\big(\mathfrak{s}_{j-1}({\bm b}_1)\mathfrak{s}_{2N-k-j}({\bm a}_1)-\mathfrak{s}_{j-1}({\bm a}_1)\mathfrak{s}_{2N-k-j}({\bm b}_1)\big) \Bigg) \zeta^{k}\\
\hphantom{h(\zeta)=}{}
+\sum_{k=0}^{N-1} \Bigg(\sum_{j=0}^{k+1}
N(k+j+1)\big(\mathfrak{s}_{N-k+j-1}({\bm a}_1)\mathfrak{s}_{N-j}({\bm b}_1)-\mathfrak{s}_{N-k+j-1}({\bm b}_1)\mathfrak{s}_{N-j}({\bm a}_1)\big)
\Bigg) \zeta^{k}.
\end{gather*}
We can see that the coefficient of the highest degree part of $h(\zeta)$ does not vanish if $\mathfrak{s}_{1}({\bm a}_1)\not=\mathfrak{s}_{1}({\bm b}_1)$. The coefficient of~$\zeta^k$ of~$h(\zeta)$ is a homogeneous polynomial of degree $2N-k-1$ of $\bm{a}_1$ and $\bm{b}_1$ which does not vanish identically. Thus, we can show that there are distinct $2(N-1)$ roots of $h(\zeta)=0$ by using Assumption~\ref{asptp}.
\end{proof}

\begin{Remark}
If $N=3$,
\begin{gather*}
{\rm{Dis}}_x \left( \operatorname{res}_\zeta \left(f(x,\zeta),g(x,\zeta)\right)\right)=256 \prod_{i=1,2,3}a_{i,1}\prod_{\substack{i=1,2,3 \\ j=1,2}}(a_{i,1}-b_{j,1}) h'(\bm{a}_1,\bm{b}_1)^3
\end{gather*}
holds. Here $h'(\bm{a}_1,\bm{b}_1)$ is a homogeneous polynomial of degree $9$ with respect to $\bm{a}_1$, $\bm{b}_1$. From $h'(\bm{a}_1,\bm{b}_1)=0$, we have the following conditions:
\begin{itemize}\itemsep0pt
\item[(i)] $b_{j,1}\neq 0$ for any $j$, $1\leq j\leq 2$.
\item[(ii)] $b_{j,1} \neq b_{j',1}$ for $j\not=j'$, $1\leq j,j'\leq 2$.
\end{itemize}
The leading coefficient of $\operatorname{res}_\zeta (f(x,\zeta),g(x,\zeta))$ with respect to~$x$ does not vanish. Then we have
\begin{gather*}
\prod_{\substack{i\neq i'}}(a_{i,1}-a_{i',1})\neq0.
\end{gather*}
\end{Remark}

Outside the turning points, there are $N$ distinct roots of the algebraic equation
\[ \sigma_0({}_N P_{N-1})(x,\zeta)=0\] in $\zeta$ of degree $N$.
We call these roots the characteristic roots of ${}_{N}P_{N-1}$.
We consider the Laurent expansion at the singular point of each characteristic root. For the case of $\varrho=0$, we substitute $\zeta=\sum_{k=m}^{\infty} c_k x^{k}$ for \eqref{P0prod}. Taking note of the degree of the leading term of \eqref{P0prod} with respect to~$x$, we find $m=-1$ or $m=0$.
If $m=-1$, there are $N-1$ choices for the leading coefficient:
\begin{gather*}
c_{-1} = -b_{\ell,1}, \qquad \ell=1,2,\dots, N-1.
\end{gather*}
If $m=0$, we can see
\begin{gather*}
c_0=\frac{a_{1,1}a_{2,1} \cdots a_{N,1}}{b_{1,1}b_{2,1}\cdots b_{N-1,1}}.
\end{gather*}
The coefficients of higher-order terms $c_{m+\ell}$, $\ell>0$, are determined recursively.
Similarly, we can find the Laurent expansions of the characteristic roots at $x=\infty$.
\begin{Definition}\label{charexpand}
We take local numbering of the characteristic roots of ${}_{N}P_{N-1}$ as follows:
\begin{itemize}\itemsep0pt
\item[(i)]For $\varrho=0$, the roots are denoted by $\zeta_m^{(0)}$, $m=1,2,\dots,N$, so that
\begin{gather*}
\zeta_\ell^{(0)} =- \frac{b_{\ell,1}}{x}+O(1),\qquad \ell=1,2,\dots,N-1,\\
\zeta_N^{(0)} = \frac{a_{1,1}a_{2,1} \cdots a_{N,1}}{b_{1,1}\cdots b_{N-1,1}}+O(x)
\end{gather*}
hold.
\item[(ii)]For $\varrho=\infty$, the roots are denoted by $\zeta_m^{(\infty)}$, $m=1,2,\dots,N$, so that
\begin{gather*}
\zeta_m^{(\infty)} = -\frac{a_{m,1}}{x}+O\left(\frac{1}{x^2}\right)
\end{gather*}
hold.
\end{itemize}
\end{Definition}

Taking suitable branch cuts connecting simple turning points, we can regard this numbering is defined globally.
For a simple turning point~$x_\ast$ and a singular point $\varrho$, there are two numbers $j,k \in \{1,2,\dots, N\}$, $j\not=k$ such that $\zeta_j^{(\varrho)}(x_\ast)=\zeta_k^{(\varrho)}(x_\ast)$ holds. We consider mainly the case where $\varrho=0$. Then we say that $x_\ast$ is a simple turning point of type $(j,k)$ (see \cite[Definition~1.2.1]{HKT}).

The characteristic variety $\operatorname{Ch}({}_NP_{N-1})$ of ${}_NP_{N-1}$ is, by definition, an algebraic curve
\begin{gather*}
\operatorname{Ch}({}_NP_{N-1})=\{(x,\zeta) \,|\, \sigma_0({}_NP_{N-1})(x,\zeta)=0\}.
\end{gather*}
This can be regarded as a compact Riemann surface $\Sigma$. There is a natural projection
\begin{gather*}
\pi\colon \ \Sigma \rightarrow \mathbb{P}_{\mathbb{C}}^1,
\end{gather*}
which is an $N$-covering map.
\begin{Lemma}\label{genus}
The genus of $\Sigma$ equals $0$.
\end{Lemma}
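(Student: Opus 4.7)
The plan is to prove $g(\Sigma)=0$ via the Riemann--Hurwitz formula applied to the $N$-fold branched covering $\pi\colon \Sigma\to \mathbb{P}^1_{\mathbb{C}}$. Writing the formula as
\[
2g(\Sigma)-2 = N(2\cdot 0-2)+\sum_{P\in\Sigma}(e_P-1),
\]
the task reduces to locating the ramification points of $\pi$ and computing their ramification indices. By definition, ramification can occur only above points of $x\in\mathbb{P}^1$ where two or more characteristic roots collide, i.e., above elements of $\mathcal{P}_{\rm tp}({}_NP_{N-1})$ and possibly above the singular points $x=0,1,\infty$ (where the algebraic equation may degenerate).

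First I would handle the genuine turning points. By Lemma \ref{tpnum}, $|\mathcal{P}({}_NP_{N-1})|=2(N-1)$, and Assumption~\ref{asptp}(i) guarantees that every such turning point is simple. Simplicity, together with $\partial_\zeta^2\sigma_0\neq 0$, means that exactly two sheets come together transversally at each turning point, giving a simple ramification point with $e_P=2$. Thus these contribute $\sum (e_P-1)=2(N-1)$ to the Riemann--Hurwitz sum.

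Next I would verify that no extra ramification hides above the three singular points by reading off the Laurent data provided in Definition~\ref{charexpand}. At $x=0$ the $N-1$ characteristic roots $\zeta_\ell^{(0)}$ have distinct principal parts $-b_{\ell,1}/x$ (distinctness of the $b_{\ell,1}$'s follows from Assumption~\ref{asptp} via the same mechanism as in the Remark), and $\zeta_N^{(0)}$ has a finite, distinct limit; hence the smooth completion $\Sigma$ has $N$ distinct points above $x=0$. An identical argument at $x=\infty$, using that the $-a_{m,1}$ are distinct, gives $N$ distinct preimages. At $x=1$ the coefficient of $\zeta^N$ in $\sigma_0(1,\zeta)$ equals $\mathfrak{s}_0({\bm b}_1)-\mathfrak{s}_0({\bm a}_1)=0$, while the coefficient of $\zeta^{N-1}$ equals $\mathfrak{s}_1({\bm b}_1)-\mathfrak{s}_1({\bm a}_1)\neq 0$ by Assumption~\ref{asptp}(ii); thus $N-1$ finite roots are present and a single sheet escapes to $\zeta=\infty$. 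On the smooth projective curve $\Sigma$ this escape corresponds to one additional unramified point above $x=1$, accounting for the spurious factor $(1-x)$ pulled out in Lemma~\ref{slmm}. In total, $x=0,1,\infty$ contribute nothing to $\sum(e_P-1)$.

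Putting these together, Riemann--Hurwitz yields
\[
2g(\Sigma)-2 = -2N+2(N-1)=-2,
\]
so $g(\Sigma)=0$. The only delicate step, and the one I would write out most carefully, is the analysis at $x=1$: one must argue that the degree drop of $\sigma_0(x,\zeta)$ as a polynomial in $\zeta$ at $x=1$ reflects a sheet going to $\zeta=\infty$ rather than a true collision of characteristic roots, so that $x=1$ is unramified on the normalized curve despite appearing as a zero of the resultant in Lemma~\ref{slmm}.
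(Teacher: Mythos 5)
Your proof is correct and follows essentially the same route as the paper: the paper's entire argument is the Riemann--Hurwitz computation $2-2g(\Sigma)=N\bigl(2-2g\bigl(\mathbb{P}^1_{\mathbb{C}}\bigr)\bigr)-\sum_{i=1}^{2(N-1)}(2-1)$, with the $2(N-1)$ points of Lemma~\ref{tpnum} as the only branch points, each simple. The extra verifications you supply --- that simplicity forces ramification index exactly $2$, and that no further ramification hides over $x=0,1,\infty$ (in particular that the degree drop of $\sigma_0$ in $\zeta$ at $x=1$ corresponds to an unramified sheet escaping to $\zeta=\infty$, explaining the factor $(1-x)$ in Lemma~\ref{slmm}) --- are precisely the details the paper leaves implicit.
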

\begin{proof}
By using the Riemann--Hurwitz formula \cite[Section~17]{OT}, we have the following relation:
\begin{gather*}
2-2g(\Sigma)=N\big(2-2g\big(\mathbb{P}^1_\mathbb{C}\big)\big)-\sum_{i=1}^{2(N-1)}(2-1).
\end{gather*}
Here $g(X)$ denotes the genus of the compact Riemann surface $X$. Hence we get
$g(\Sigma)=0$.
\end{proof}

A WKB solution $\psi$ of \eqref{nFn-1eq} is a formal solution of the form
\begin{gather*}
\psi=\exp \left(\int S \,{\rm d} x\right),\\
S=\eta S_{-1}+S_0+\eta^{-1}S_{1}+\cdots=\sum_{\ell=-1}^\infty \eta^{-\ell} S_\ell
\end{gather*}
(see \cite[Definition~3.2]{AKKoT}).
By inserting $\psi=\exp \big(\int S \,{\rm d}x \big)$ into \eqref{nFn-1eq},
we have the following nonlinear differential equation for~$S$:
\begin{gather}\label{RiS}
\operatorname{Ri}({}_N P_{N-1})(S)=0.
\end{gather}
Here we set
\begin{gather*}
\operatorname{Ri}({}_N P_{N-1})(S)=\exp\left(-\int S \,{\rm d}x\right){}_N P_{N-1}\exp\left(\int S \,{\rm d}x\right).
\end{gather*}
The leading term of the equation \eqref{RiS} with respect to $\eta^{-1}$ determines~$S_{-1}$.
Therefore $S_{-1}$ should satisfy $\sigma_0({}_NP_{N-1})(x,S_{-1})=0$.
Hence we can take one of the characteristic roots as~$S_{-1}$.
The higher-order terms $S_{j}$, $j=0,1,2, \dots$, can be determined recursively and uniquely outside the turning points if~$S_{-1}$ is chosen.
The following Lemma is obtained by the same way as \cite[Proposition~3.6]{OT}:
\begin{Lemma}\label{Sexpandreg}
Let $S^{(\varrho,m)}$, $m=1,2,\dots,N$, be the formal solutions of~\eqref{nFn-1eq} such that the numbering are consistent with that of the leading terms given in Definition~{\rm \ref{charexpand}}.
Then the formal solutions $S^{(\varrho,m)}$, $m=1,2,\dots,N$, have the following local behaviors near $x=\varrho$, $\varrho=0,\infty$:
\begin{gather}\label{Sexp01}
S^{(0,\ell)}=\frac{1-b_{\ell}}{x}+\frac{\prod\limits_{m=1}^{N} (1+a_{m}-b_{\ell})}{(b_{\ell}-2)\prod\limits_{\substack{m=1 \\ m\not=\ell}}^{N-1}(b_{\ell}-b_{m}-1)}+O(x),\qquad
\ell=1,2,\dots,N-1,\\
\label{Sexp02}
S^{(0,N)}=\frac{a_{1}a_{2} \cdots a_{N}}{b_{1}b_{2} \cdots b_{N-1}}+O(x),\\ \label{Sexp03}
S^{(\infty,\ell)}=-\frac{a_{\ell}}{x}- \frac{a_\ell \prod\limits_{m=1}^{N-1} (1+a_\ell-b_m)}{\prod\limits_{\substack{m=1 \\ m\not=\ell}}^{N} (1+a_\ell-a_m)}\frac{1}{x^2}+O\left( \frac{1}{x^3}\right), \qquad
\ell=1,2,\dots,N.
\end{gather}
Here we use the notation $O(x)$ in \eqref{Sexp01} and \eqref{Sexp02} in the sense that these parts can be written as
\begin{gather*}
x\sum_{j=-1}^\infty \eta^{-j} f_j(x)
\end{gather*}
with some holomorphic functions $f_j(x)$, $j=-1,0,1,\dots$, near $x=0$. The notation $O\big( {1}/{x^3}\big)$ in~\eqref{Sexp03} should be understood similarly.
\end{Lemma}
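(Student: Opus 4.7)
The plan is to identify each WKB solution $\psi^{(\varrho,m)}=\exp\bigl(\int S^{(\varrho,m)}\,{\rm d}x\bigr)$ with a Frobenius-type series solution of \eqref{NFN-1eq} at the regular singular point $x=\varrho$, and then to read off the first two terms of the logarithmic derivative $S^{(\varrho,m)}=(\log\psi^{(\varrho,m)})'$. This is the strategy of \cite[Proposition~3.6]{OT} adapted to the generalized hypergeometric setting; since \eqref{nFn-1eq} differs from \eqref{NFN-1eq} only by the overall scalar $\eta^{-N}$, the two equations share solutions, with the parameters $a_i,b_j$ now regarded as affine functions of $\eta$.

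First I would carry out indicial analysis. Applying ${}_{N}P_{N-1}$ to $x^{\rho}$ and isolating the lowest power of $x$, the indicial polynomial at $x=0$ is $\rho\prod_{j=1}^{N-1}(\rho+b_j-1)$, whose roots are the characteristic exponents $0$ and $1-b_j$; the analogous computation at $x=\infty$ gives the exponents $-a_i$, $i=1,\dots,N$. Matching the leading Laurent behaviour of the characteristic roots listed in Definition~\ref{charexpand} against the $\eta$-linear parts of these exponents (recalling $a_i=a_{i,0}+a_{i,1}\eta$ and $b_j=b_{j,0}+b_{j,1}\eta$), I identify $\psi^{(0,\ell)}$ for $\ell=1,\dots,N-1$ with a scalar multiple of $w_{\ell+1}^{[0]}$, $\psi^{(0,N)}$ with $w_{1}^{[0]}={}_{N}F_{N-1}(\bm{a};\bm{b};x)$, and $\psi^{(\infty,\ell)}$ with $w_{\ell}^{[\infty]}$.

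Next I would extract the first two terms of each Frobenius series from the defining formula \eqref{NFN-1}: the coefficient of $x$ in ${}_{N}F_{N-1}(\bm{a}';\bm{b}';x)$ is $\prod_{i}a_i'/\prod_{k}b_k'$. Substituting the shifted parameter tuples appearing in each of $w_{\ell+1}^{[0]}$, $w_{1}^{[0]}$, and $w_{\ell}^{[\infty]}$, and then differentiating $\log\bigl(x^{\rho}(1+c_1 y+O(y^{2}))\bigr)$ with $y=x$ near the origin or $y=1/x$ near infinity, yields $\rho/x+c_1+O(x)$ or $-a_\ell/x-c_1/x^{2}+O(1/x^{3})$, respectively. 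Rewriting the Pochhammer-type products $(2-b_\ell)\prod_{k\ne\ell}(b_k+1-b_\ell)$ and $\prod_{m\ne\ell}(1+a_m-a_\ell)$ in the sign conventions of the statement then gives \eqref{Sexp01}--\eqref{Sexp03}.

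The principal subtlety is that the Frobenius exponent $\rho=1-b_\ell$ (or $\rho=-a_i$) is itself $\eta$-linear, so $x^{\rho}$ is not literally a formal series in $\eta^{-1}$. The factor $\exp(-b_{\ell,1}\eta\log x)$ has to be absorbed into the leading WKB exponential $\exp(\eta\int\zeta_\ell^{(0)}\,{\rm d}x)$ via the simple pole $-b_{\ell,1}/x$ of $\zeta_\ell^{(0)}$; once this separation is made, the residual factor is a genuine formal power series in $\eta^{-1}$ and matches the $S_0+\eta^{-1}S_1+\cdots$ part of the WKB expansion. The Riccati recursion for $S_k$ at each order in $\eta^{-1}$ then requires only division by $\partial_\zeta\sigma_0({}_{N}P_{N-1})(x,\zeta_m^{(\varrho)})$, which is non-vanishing on a common punctured neighbourhood of $\varrho$ because $\varrho\notin\mathcal{P}_{\rm tp}({}_{N}P_{N-1})$ for generic parameters, so each $S_k$ is holomorphic near $\varrho$; this simultaneously justifies the uniform $O(x)$ and $O(1/x^{3})$ structure stated at the end of the lemma.
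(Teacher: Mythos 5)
Your overall strategy---Frobenius analysis at the regular singular points---is the natural one, and since the paper proves this lemma only by citing an analogous proposition elsewhere, that is really the only available comparison. Your indicial computation (exponents $0$ and $1-b_j$ at the origin, $-a_i$ at infinity), the matching with Definition~\ref{charexpand}, and the extraction of the coefficient of $x$ from \eqref{NFN-1} are all correct. The gap is in the pivotal identification of the formal WKB solution $\exp\big(\int S^{(0,\ell)}\,{\rm d}x\big)$ with a scalar multiple of the exact solution $w_{\ell+1}^{[0]}$, and your justification of it fails concretely: after peeling off $x^{-b_{\ell,1}\eta}$ the residual factor is $x^{1-b_{\ell,0}}\big(1+\sum_{k\geq1}c_kx^k\big)$ where $c_k$ is a quotient of Pochhammer products with numerator of degree $kN$ and denominator of degree $k(N-1)$ in $\eta$, so $c_k=O\big(\eta^{k}\big)$ and the residual is \emph{not} a formal power series in $\eta^{-1}$; it still carries the exponential-in-$\eta$ factor $\exp\big(\eta\int\big(\zeta_\ell^{(0)}+b_{\ell,1}/x\big)\,{\rm d}x\big)$. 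Equating $S^{(0,\ell)}$ with the $\eta^{-1}$-asymptotic expansion of $\partial_x\log w_{\ell+1}^{[0]}$ is precisely the kind of asymptotic-matching (Borel summability) statement that the paper lists as an open problem, so it cannot be invoked here.

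The repair is to stay entirely formal: substitute the Laurent ansatz $S=\alpha x^{-1}+\beta+O(x)$ directly into $\operatorname{Ri}({}_NP_{N-1})(S)=0$, keeping $\alpha$, $\beta$ as exact (rational in $\eta$) quantities. The coefficient of $x^{-1}$ gives the indicial equation $\alpha\prod_j(\alpha-1+b_j)=0$, whence $\alpha=1-b_\ell$ is the root whose $\eta$-linear part matches the residue $-b_{\ell,1}$ of $\zeta_\ell^{(0)}$; the coefficient of $x^{0}$ gives $(\alpha+1)\prod_j(\alpha+b_j)\,\beta=\prod_i(\alpha+a_i)$, solvable because $\alpha+1$ is not another indicial root under the non-resonance hypotheses. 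Uniqueness of the WKB recursion for the fixed leading term $S_{-1}=\zeta_\ell^{(0)}$ then identifies this formal solution with $S^{(0,\ell)}$, and re-expanding $\alpha$, $\beta$ in $\eta^{-1}$ yields exactly the displayed structure. This also supplies what your last paragraph does not: the $S_j$ are not holomorphic at $\varrho$ ($S_{-1}$ and $S_0$ have simple poles there), and non-vanishing of $\partial_\zeta\sigma_0$ along $\zeta_\ell^{(0)}$ controls the pole \emph{order} but neither the vanishing of the residues of $S_j$ for $j\geq1$ nor the constant terms---that is precisely the content of the order-$x^{-1}$ and order-$x^{0}$ equations. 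Finally, do not hide the bookkeeping in ``sign conventions'': your denominator $(2-b_\ell)\prod_{m\neq\ell}(1+b_m-b_\ell)$ equals $(-1)^{N-1}(b_\ell-2)\prod_{m\neq\ell}(b_\ell-b_m-1)$, so the match with \eqref{Sexp01} as displayed is sign-sensitive in $N$ and must be verified explicitly, and likewise the denominator $\prod_{m\neq\ell}(1+a_\ell-a_m)$ in \eqref{Sexp03} versus the lower parameters $1+a_m-a_\ell$ read off from $w_{\ell}^{[\infty]}$.
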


\subsection{Factorization}
\par We assume that there is a simple turning point $x_\ast$ of ${_N}P_{N-1}$ with the characteristic value
$\zeta_\ast$. Then $\sigma_0({}_NP_{N-1})(x, \zeta)$ is uniquely decomposed holomorphically in a neighborhood of $(x_\ast,\zeta_\ast)$ in the form
\begin{gather*}
\sigma_0({}_NP_{N-1})(x, \zeta)=l(x, \zeta) r(x, \zeta),
\end{gather*}
where $r(x,\zeta)$ is a Weierstrass polynomial of degree $2$ in $\zeta$ with the center at $(x_\ast, \zeta_\ast)$ and $l(x_\ast,\zeta_\ast) \allowbreak \not= 0$. By the definition, $r(x,\zeta)$ has the form
\begin{gather*}
r(x, \zeta)= (\zeta-\zeta_{*} )^{2}+f_{1}(x) (\zeta-\zeta_{*} )+f_{2}(x),
\end{gather*}
where $f_{j}(x)$ vanishes at $x=x_{*}$ for $j=1,2$. We may assume $\zeta_{j}^{(\varrho)}$ and $\zeta_{k}^{(\varrho)}$ are the roots of $r(x, \zeta)=0$. We set
\begin{gather*}
{}_N {\tilde P}_{N-1}= \frac{1}{x^{N-1}(1-x)} {}_N { P}_{N-1}.
\end{gather*}
Then $\sigma\big({}_N {\tilde P}_{N-1}\big)(x,\zeta,\eta)$ becomes a monic polynomial with respect to~$\zeta$:
\begin{gather*}
\sigma\big({}_N {\tilde P}_{N-1}\big)(x,\zeta,\eta)=\zeta^N+\sum_{k=0}^{N-1}\tilde p_k (x,\eta)\zeta^k.
\end{gather*}
It follows from \cite[Theorem~5.1]{AKKoT} that there uniquely exist differential operators $L$ and $R$ of
WKB type near $x_{*}$ which satisfy
\begin{gather}\label{fac}
{}_N {\tilde P}_{N-1}=LR
\end{gather}
and
\begin{itemize}\itemsep0pt
\item[(i)] The principal symbol $\sigma_0(R)(x, \zeta)$ of $R$ coincides with $r(x, \zeta)$.
\item[(ii)] For each $j>0$, the coefficient $\sigma_j(R)(x, \zeta)$ of $\eta^{-j}$ of the symbol of $R$ is of degree at most one in~$\zeta$.
\item[(iii)] The principal symbol $\sigma_{0}(L)(x, \zeta)$ of $L$ does not vanish at $(x_{*}, \zeta_{*})$.
\end{itemize}
Thus constructed operator $R$ has the form
\begin{gather*}
R=\big(\eta^{-1} \partial_{x}\big)^2+A(x, \eta) \eta^{-1} \partial_{x}+B(x, \eta)
\end{gather*}
with
\begin{gather*}
A(x, \eta)=A_{0}(x)+A_{1}(x) \eta^{-1}+A_{2}(x) \eta^{-2}+\cdots,\\
B(x, \eta)=B_{0}(x)+B_{1}(x) \eta^{-1}+B_{2}(x) \eta^{-2}+\cdots
\end{gather*}
and $L$ is an $(N-2)$-th order operator
\begin{gather*}
L=\big(\eta^{-1} \partial_{x}\big)^{N-2}+\sum_{k=0}^{N-3} L_k(x, \eta) \big(\eta^{-1} \partial_{x}\big)^k
\end{gather*}
with
\begin{gather*}
L_k(x, \eta)=L_{k,0}(x)+L_{k,1}(x)\eta^{-1}+L_{k,2}(x)\eta^{-2}+\cdots.
\end{gather*}
The operators $L$ and $R$ are constructed as follows.
Relation~\eqref{fac} yields
\begin{gather*}
\tilde p_2=A+L_0,\qquad
\tilde p_1=B+A L_0 +\eta^{-1} A',\qquad
\tilde p_0=B L_0+\eta^{-1} B'
\end{gather*}
for $N=3$ and
\begin{gather*}
\tilde p_{N-1}(x, \eta)=A+L_{N-3},\\
\tilde p_{k}(x, \eta)=\eta^{-N+k+2}\left( \binom{N-2}{k-1} \eta^{-1} A^{(N-k-1)}+\binom{N-2}{k}B^{(N-k-2)}\right)\\
\hphantom{\tilde p_{k}(x, \eta)=}{} + \sum_{j=k}^{N-3} \eta^{-j+k} \left( \binom{j}{k-1}\eta^{-1}A^{(j-k+1)}+\binom{j}{k}B^{(j-k)}\right) L_{j}\\
\hphantom{\tilde p_{k}(x, \eta)=}{}+ L_{k-2} +A L_{k-1},\qquad 2\leq k\leq N-2,
\\
\tilde p_{1}(x, \eta) = \eta^{-N+3}\big(\eta^{-1}A^{(N-2)} +(N-2)B^{(N-3)}\big)\\
\hphantom{\tilde p_{1}(x, \eta) =}{} + \sum_{j=1}^{N-3} \eta^{-j+1} \big( \eta^{-1} A^{(j)}+ j B^{(j-1)}\big)L_j+A L_0,\\
\tilde p_{0}(x, \eta)= \eta^{-N+2} B^{(N-2)}+\sum_{j=0}^{N-3} \eta^{-j}B^{(j)} L_{j}
\end{gather*}
for $N\geq 4$. These relations determine $A$, $B$ and $L_k$, $0\leq k \leq N-3$, if we choose the leading terms of them, which solve a system of algebraic equations. For example, if $N=3$, we obtain~$A$ by solving the following equation:
\begin{gather*}
A^3-2 \tilde p_2A^2 -\big(3\eta^{-1}A'-\eta^{-1}\tilde p_2-\tilde p_1-\tilde p_2^2\big)A \\
\qquad{}+2 \eta^{-1} \tilde p_2A' + \eta^{-1}A''+\tilde p_0 -\tilde p_1 \tilde p_2-\eta^{-1}\tilde p_1'=0.
\end{gather*}
We consider a WKB solution
\begin{gather*}
\phi =\exp \left( \int T \,{\rm d} x\right),\\
T =\eta T_{-1}+T_0+\eta^{-1}T_{1}+\cdots=\sum_{\ell=-1}^\infty \eta^{-\ell} T_\ell
\end{gather*}
of $R \phi=0$. Here $T$ is determined by the Riccati equation associated with this equation:
\begin{gather}\label{RiReq}
\operatorname{Ri}(R)(T)=\eta^{-2}\big(T^{2}+ T'\big)+A \eta^{-1} T+B=0.
\end{gather}
Since we have \eqref{fac}, $\phi$ is a formal solution of~\eqref{fac}. The following lemma shows that
we may regard it as a WKB solution of~\eqref{fac}.

\begin{Lemma}\label{decLR}
We have the following relation for $N\geq 3${\rm:}
\begin{gather*}
\operatorname{Ri}\big({}_N {\tilde P }_{N-1}\big)(S) = \sum_{k=0}^{N-2} \frac{\eta^{-k}}{k!}\operatorname{Ri}\big(\wick{\partial^k_\zeta \sigma(L)(x,\zeta,\eta)}\big)(S)\partial^k_x \operatorname{Ri}(R)(S).
\end{gather*}
Here $\wick{\sigma(P)(x,\zeta,\eta)}$ designates
the differential operator defined by the total symbol
$\sigma(P)(x,\zeta,\eta)$ $($see {\rm \cite[Definition 4.6]{A}} and
{\rm \cite{AKKoT})}. Hence $\operatorname{Ri}(R)(T)=0$ implies
$\operatorname{Ri}\big({}_N {\tilde P }_{N-1}\big)(T)=0$.
\end{Lemma}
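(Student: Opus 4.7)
The plan is to establish the identity by direct computation, exploiting the fact that $\operatorname{Ri}(P)(S)$ equals the conjugation of $P$ by the exponential $e^{\int S\,{\rm d}x}$ applied to the constant function $1$. First I would write $\operatorname{Ri}\bigl({}_N\tilde P_{N-1}\bigr)(S) = e^{-\int S\,{\rm d}x}\,LR\,e^{\int S\,{\rm d}x}$ and observe that for any operator $P$ and formal function $f$ one has $P\bigl(e^{\int S\,{\rm d}x} f\bigr) = e^{\int S\,{\rm d}x}\cdot \bigl(e^{-\int S\,{\rm d}x}Pe^{\int S\,{\rm d}x}\bigr)(f)$. Applying this first to $R$ gives $R\bigl(e^{\int S\,{\rm d}x}\bigr) = e^{\int S\,{\rm d}x}\cdot r$ with $r := \operatorname{Ri}(R)(S)$, so the problem reduces to analysing how $L$ acts on the product $e^{\int S\,{\rm d}x}\cdot r$ and then stripping off the exponential.

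Next I would expand $L$ through its total symbol. Writing $\sigma(L)(x,\zeta,\eta) = \sum_\alpha c_\alpha(x,\eta)\zeta^\alpha$, the Wick-ordering convention gives $\wick{\sigma(L)}=L = \sum_\alpha c_\alpha(x,\eta)(\eta^{-1}\partial_x)^\alpha$; combining the binomial Leibniz rule
\[ \partial_x^\alpha\bigl(e^{\int S\,{\rm d}x}\,r\bigr) = \sum_{k=0}^\alpha \binom{\alpha}{k}\bigl(\partial_x^{\alpha-k}e^{\int S\,{\rm d}x}\bigr)\,\partial_x^k r \]
with the obvious identity $\partial_x^{\alpha-k}e^{\int S\,{\rm d}x} = e^{\int S\,{\rm d}x}\cdot \operatorname{Ri}\bigl(\partial_x^{\alpha-k}\bigr)(S)$, and dividing the result by $e^{\int S\,{\rm d}x}$, then yields
\[ \operatorname{Ri}\bigl({}_N\tilde P_{N-1}\bigr)(S) = \sum_{\alpha}\sum_{k=0}^\alpha \binom{\alpha}{k}\,c_\alpha(x,\eta)\,\operatorname{Ri}\bigl((\eta^{-1}\partial_x)^{\alpha-k}\bigr)(S)\cdot \eta^{-k}\partial_x^k r. \]

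The final step is pure bookkeeping. Swapping the order of summation and collecting the coefficient of $\eta^{-k}\partial_x^k r$, I would recognise the elementary identity $\sum_{\alpha\ge k}\binom{\alpha}{k}c_\alpha(x,\eta)\zeta^{\alpha-k} = \frac{1}{k!}\partial_\zeta^k\sigma(L)(x,\zeta,\eta)$; substituting $\zeta\mapsto \eta^{-1}\partial_x$ under Wick ordering and then applying $\operatorname{Ri}(\cdot)(S)$ reproduces precisely $\frac{1}{k!}\operatorname{Ri}\bigl(\wick{\partial_\zeta^k\sigma(L)(x,\zeta,\eta)}\bigr)(S)$, yielding the claimed formula with the range $0\le k\le N-2$ forced by $\deg_\zeta\sigma(L)=N-2$. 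The corollary $\operatorname{Ri}(R)(T)=0\Rightarrow\operatorname{Ri}\bigl({}_N\tilde P_{N-1}\bigr)(T)=0$ is then immediate, since $r\equiv 0$ makes every $\partial_x^k r$ vanish. There is no real obstacle here, only a single point demanding care: the consistent bookkeeping of the Wick-ordering convention and of the powers of $\eta^{-1}$ produced by each $(\eta^{-1}\partial_x)^\alpha$, so that the $\eta^{-k}$ factor on the right-hand side matches up correctly after $\partial_x^{\alpha-k}e^{\int S\,{\rm d}x}$ is repackaged in terms of $\operatorname{Ri}\bigl((\eta^{-1}\partial_x)^{\alpha-k}\bigr)(S)$.
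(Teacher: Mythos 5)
Your proposal is correct and follows essentially the same route as the paper: both factor the conjugation through $R$ first, reducing to $\mathrm{e}^{-\int S\,\mathrm{d}x}L\big(\mathrm{e}^{\int S\,\mathrm{d}x}\operatorname{Ri}(R)(S)\big)$, and then identify the resulting operator via the Leibniz/product-symbol formula $\sum_k \frac{\eta^{-k}}{k!}\partial_\zeta^k\sigma(L)\,\partial_x^k\operatorname{Ri}(R)(S)$. The only difference is presentational: the paper invokes the symbol-of-a-product formula directly, whereas you rederive it termwise from the binomial Leibniz rule, which is a harmless (and arguably more self-contained) way of justifying the same step.
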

\begin{Remark}
The differential operator $\wick{\partial^k_\zeta \sigma(L)(x,\zeta,\eta)}$ is recovered from
the total symbol $\partial^k_\zeta \sigma(L)(x,\zeta,\eta)$ by replacing $\zeta$ by
$\eta^{-1}\partial_{x}$ after moving all powers of $\zeta$ in each term in the symbol to the rightmost part.
\end{Remark}
\begin{proof}
Since we have \eqref{fac}, $\operatorname{Ri}\big({}_{N}\tilde P_{N-1}\big)(S)$ is computed
in the following form
\begin{align*}
\operatorname{Ri}\big({}_{N}\tilde P_{N-1}\big)(S)&={\rm e}^{-\int S\,{\rm d}x}LR {\rm e}^{\int S\,{\rm d}x}\\
&={\rm e}^{-\int S\,{\rm d}x}L{\rm e}^{\int S\,{\rm d}x}{\rm e}^{-\int S\,{\rm d}x}R {\rm e}^{\int S\,{\rm d}x}.
\end{align*}
By the definition of $\operatorname{Ri}(R)(S)$, this equals
\[
{\rm e}^{-\int S\,{\rm d}x}L{\rm e}^{\int S\,{\rm d}x}\operatorname{Ri}(R)(S),
\]
which can be written as
\[
{\rm e}^{-\int S\,{\rm d}x}L\operatorname{Ri}(R)(S){\rm e}^{\int S\,{\rm d}x}.
\]
The product of the differential operator $L$ of order $N-2$ and the multiplying operator $\operatorname{Ri}(R)(S)$ has the symbol
\[
\sum_{k=0}^{N-2}\frac{\eta^{-k}}{k!}\partial_{\zeta}^{k}\sigma(L)(x,\zeta,\eta)\partial_{x}^{k}\operatorname{Ri}(R)(S).
\]
Since $\partial_{x}^{k}\operatorname{Ri}(R)(S)$ does not contain $\zeta$ for each $k$, we can write
\begin{align*}
L\operatorname{Ri}(R)(S)&=\wick{ \sum_{k=0}^{N-2}\frac{\eta^{-k}}{k!}\partial_{\zeta}^{k}\sigma(L)(x,\zeta,\eta)\partial_{x}^{k}\operatorname{Ri}(R)(S)}\\
&= \sum_{k=0}^{N-2}\frac{\eta^{-k}}{k!}\partial_{x}^{k}\operatorname{Ri}(R)(S) \wick{\partial_{\zeta}^{k}\sigma(L)(x,\zeta,\eta)}.
\end{align*}
Thus we have
\[
{\rm e}^{-\int S\,{\rm d}x}L\operatorname{Ri}(R)(S){\rm e}^{\int S\,{\rm d}x}=\sum_{k=0}^{N-2}\frac{\eta^{-k}}{k!}\operatorname{Ri}\big(\wick{\partial_{\zeta}^{k}\sigma(L)(x,\zeta,\eta)}\big)(S)\partial_{x}^{k}\operatorname{Ri}(R)(S).
\]
This proves the lemma.
\end{proof}

\subsection{Definition of Voros coefficients}
Recall that the characteristic roots $\zeta_{j}^{(\varrho)}$ and $\zeta_{k}^{(\varrho)}$ introduced in the preceding section satisfy
the quadratic equation
\begin{gather*}
\zeta^{2}+A_{0} \zeta+B_{0}=0.
\end{gather*}
We may assume
\begin{gather*}
S_{-1}^{(\varrho,j)}=\zeta_{j}=\frac{-A_{0}+\sqrt{A_{0}^{2}-4 B_{0}}}{2}, \qquad S_{-1}^{(\varrho,k)}=\zeta_{k}=\frac{-A_{0}-\sqrt{A_{0}^{2}-4 B_{0}}}{2}
\end{gather*}
for suitable choice of the branch of the square root. We set
\begin{gather}\label{Sevenodd}
S_{{\rm even}}^{(\varrho,j,k)}=\frac{1}{2}\big(S^{(\varrho,j)}+S^{(\varrho,k)}\big), \qquad S_{{\rm odd}}^{(\varrho,j,k)}=\frac{1}{2}\big(S^{(\varrho,j)}-S^{(\varrho,k)}\big).
\end{gather}
Then we have
\begin{gather*}
S_{{\rm even}}^{(\varrho,j, k)}+\frac{1}{2} \eta A=-\frac{1}{2} \frac{\partial_x S_{{\rm odd}}^{(\varrho,j, k)}}{S_{{\rm odd}}^{(\varrho,j, k)}}=-\frac{1}{2} \frac{d}{d x} \log S_{{\rm odd}}^{(\varrho,j, k)}
\end{gather*}
by using \eqref{RiReq}.
Thus we can take the following normalization of integration:
\begin{gather*}
\int S_{{\rm even}}^{(\varrho,j, k)}\, {\rm d}x=-\frac{1}{2} \log S_{{\rm odd}}^{(\varrho,j, k)}-\frac{1}{2} \eta \int A \,{\rm d}x.
\end{gather*}
Here we choose a primitive function $\int A \,{\rm d}x$.
\begin{Definition}
The WKB solutions $\psi_{\pm}^{(\tau)}$ of \eqref{nFn-1eq} normalized
at the turning point $\tau$ of type $(j,k)$ are defined by
\begin{gather*}
\psi_{\pm}^{(\tau)} =\frac{1}{\sqrt{S_{{\rm odd}}^{(\varrho,j,k)}}} \exp \left(-\frac{1}{2} \eta \int A \,{\rm d}x\right) \exp \left(\pm\int_{\tau}^{x} S_{{\rm odd}}^{(\varrho,j,k)} \,{\rm d} x\right).
\end{gather*}
Here the integration of $S_{{\rm odd}}^{(\varrho,j,k)}$ from $\tau$ to $x$ is understood as a half of the contour integral of
it on the path starting from $x$ on the $k$-th sheet, going around $\tau$ counterclockwise and back to~$x$ on the $j$-th
sheet.
\end{Definition}

We denote by $S^{(\varrho,j,k)}_{{\rm odd},\ell}$ the coefficient of $\eta^{-\ell}$ of $S^{(\varrho,j,k)}_{{\rm odd}}$ and set $
S^{(\varrho,j,k)}_{{\rm odd},\leq 0}= \eta S^{(\varrho,j,k)}_{{\rm odd},-1}+S^{(\varrho,j,k)}_{{\rm odd},0}$.
It follows from Lemma~\ref{Sexpandreg} and \eqref{Sevenodd} that
\begin{gather*}
\underset{x=\varrho}{\operatorname{Res}}\, S_{\rm odd}^{(\varrho,j,k)} \,{\rm d} x=\underset{x=\varrho}{\operatorname{Res}} \, S_{{\rm odd}, \leq 0}^{(\varrho,j,k)} \,{\rm d}x
\end{gather*}
holds.

\begin{Definition}
The WKB solutions $\psi_{\pm}^{(\varrho)}$ of \eqref{nFn-1eq} normalized
at the singular point $\varrho$ are defined by
\begin{gather*}
\psi_{\pm}^{(\varrho)} =\frac{1}{\sqrt{S^{(\varrho,j,k)}_{\rm odd}}}\exp \left(-\frac{1}{2}\eta \int\! A\,{\rm d}x \right)
 \exp \left(\pm\int^x_{\varrho}\! \left(S^{(\varrho,j,k)}_{\rm odd}-S^{(\varrho,j,k)}_{{\rm odd},\leq 0}\right)\,{\rm d}x \pm \int^x_{\tau} \! S^{(\varrho,j,k)}_{{\rm odd},\leq 0}\,{\rm d}x\right).
\end{gather*}
\end{Definition}

\begin{Definition}The Voros coefficient $V_{\varrho}^{(j, k)}$ at $x=\varrho$ of type $(j, k)$ is defined by
\begin{gather*}
V^{(j,k)}_\varrho=\int_\varrho^\tau \left(S^{(\varrho,j,k)}_{\rm{odd}}-S^{(\varrho,j,k)}_{\rm{odd},\leq 0}\right)\,{\rm d}x.
\end{gather*}
\end{Definition}
The Voros coefficient relates two kinds of the normalization of WKB solutions defined in the preceding subsection. We have the following formal relations:
\begin{gather*}
\psi_{\pm}^{(\varrho)}=\exp\big( V_{\varrho}^{(j, k)}\big) \psi_{\pm}^{(\tau)}.
\end{gather*}
The Voros coefficient is rewritten as
\begin{gather*}
V^{(j,k)}_\varrho=\frac{1}{2} \int_{\gamma_{j,k}} (S-\eta S_{-1}-S_{0} )\,{\rm d}x.
\end{gather*}
Here we consider that $S$ is defined on the Riemann surface $\Sigma$ of $S_{-1}$ and $\gamma_{j,k}$ is a path on $\Sigma$ starting from the singular point $\varrho$ on the $j$-th sheet, going to and detouring $\tau$ counterclockwise on the base space and back to the singular point $\varrho$ on the $k$-th sheet.
\begin{figure}[h]\centering
\begin{overpic}[scale=0.50]{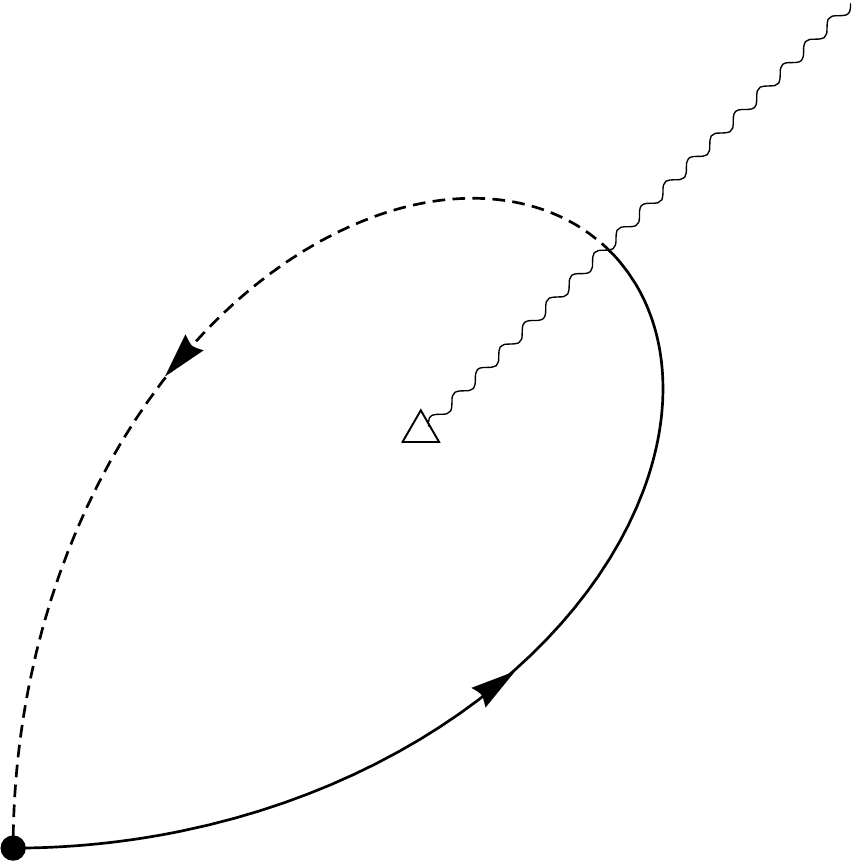}
\put(0,85){$k$-th sheet}
\put(65,0){$j$-th sheet}
\put(45,55){$\tau$}
\put(3.5,5){$\varrho$}
\put(80,50){$\gamma_{j,k}$}
\end{overpic}

\caption{The path $\gamma_{j,k}$.}
\end{figure}

\begin{Remark}
For any $1\leq j,k,\ell \leq N$, the following relation holds from Lemma \ref{genus}:
\begin{gather*}
V^{(j,k)}_\varrho+V^{(k,\ell)}_\varrho=V^{(j,\ell)}_\varrho.
\end{gather*}
Especially, $V^{(j,k)}_\varrho=-V^{(k,j)}_\varrho$ holds.
\end{Remark}

\section{Explicit forms of the Voros coefficients}\label{section3}
\subsection{Ladder operators}
\begin{Lemma}
Let $\mathcal{S} \left(\begin{smallmatrix} {\bm a}\\ {\bm b}\end{smallmatrix}\right)$ be the linear space of all solutions of ${}_NP_{N-1} \psi=0$. The operators
\begin{gather*}
H(a_i)=\vartheta_x+a_i, \qquad i=1,2,\dots,N,\\
B(b_j)=\vartheta_x+b_j,\qquad j=1,2,\dots,N-1
\end{gather*}
induce homomorphisms
\begin{gather} \notag
H(a_i)\colon \ \mathcal{S} \left(\begin{matrix}{\bm{a}} \\ {\bm{b}}\end{matrix}\right) \rightarrow \mathcal{S} \left(\begin{matrix}{\bm{a}}+{\bf{e}}_i \\ {\bm{b}}\end{matrix}\right), \qquad i=1,2,\dots,N,\\
B(b_j)\colon \ \mathcal{S} \left(\begin{matrix}{\bm{a}} \\ {\bm{b}}+{\bf{e}}_j \end{matrix}\right) \rightarrow \mathcal{S} \left(\begin{matrix}{\bm{a}}\\ {\bm{b}}\end{matrix}\right), \qquad j=1,2,\dots,N-1,\label{LoB}
\end{gather}
respectively. Here ${\bf e}_m$ denotes the $m$-th unit vector.
\end{Lemma}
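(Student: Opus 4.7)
The plan is to prove both claims by establishing, at the level of differential operators, the intertwining identities
\begin{gather*}
{}_N P_{N-1}\big|_{\bm{a}\to\bm{a}+{\bf e}_i} \cdot H(a_i) = (\vartheta_x + a_i + 1)\cdot {}_N P_{N-1},\\
{}_N P_{N-1} \cdot B(b_j) = (\vartheta_x + b_j)\cdot {}_N P_{N-1}\big|_{\bm{b}\to\bm{b}+{\bf e}_j},
\end{gather*}
where ${}_N P_{N-1}\big|_{\bm{a}\to\bm{a}+{\bf e}_i}$ denotes the operator \eqref{nPn-1} with $a_i$ replaced by $a_i+1$, and similarly for $\bm{b}$. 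Both conclusions of the lemma follow immediately: if $\psi$ lies in the source space, the right-hand side annihilates $\psi$, hence so does the left, so $H(a_i)\psi$ or $B(b_j)\psi$ lies in the indicated target space.

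The only tools needed are the commutation relation
\begin{equation*}
\partial_x (\vartheta_x + c) = (\vartheta_x + c + 1)\,\partial_x
\end{equation*}
for any constant $c$ (equivalent to $[\partial_x,\vartheta_x]=\partial_x$), together with the elementary observation that any two polynomials in $\vartheta_x$ with scalar coefficients commute with each other. Thus none of the factors appearing in \eqref{nPn-1} interact nontrivially except via this shift rule.

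For the $H(a_i)$ identity, I would compose $\prod_{j=1}^{N-1}(\vartheta_x+b_j)\partial_x$ on the right with $(\vartheta_x+a_i)$: the commutation rule rewrites this as $\prod_j(\vartheta_x+b_j)(\vartheta_x+a_i+1)\partial_x$, and then commutativity among $\vartheta_x$-polynomials extracts the factor $(\vartheta_x+a_i+1)$ to the front. In parallel, $\prod_{k\neq i}(\vartheta_x+a_k)(\vartheta_x+a_i)=\prod_{k=1}^{N}(\vartheta_x+a_k)$, and in the left-hand operator this comes already prefixed by $(\vartheta_x+a_i+1)$. Subtracting, both contributions carry the common factor $(\vartheta_x+a_i+1)$ in front of the original $\eta^{N}\,{}_NP_{N-1}$, yielding the first identity.

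The $B(b_j)$ identity is perfectly analogous: pushing $(\vartheta_x+b_j)$ through $\partial_x$ in the $\bm{b}$-product produces $\prod_{k=1}^{N-1}(\vartheta_x+b_k)(\vartheta_x+b_j+1)$, which by commutativity factors as $(\vartheta_x+b_j)$ times $\prod_k(\vartheta_x+b_k+\delta_{kj})$, the latter being precisely the $\vartheta_x$-product for parameters $\bm{b}+{\bf e}_j$; meanwhile $(\vartheta_x+b_j)$ passes freely through $\prod_{i=1}^{N}(\vartheta_x+a_i)$ in the other term, giving the second identity. There is no substantive difficulty in this proof: the entire argument is careful bookkeeping of the $[\partial_x,\vartheta_x]=\partial_x$ relation and of the commutativity of polynomials in $\vartheta_x$.
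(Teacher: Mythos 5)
Your proof is correct and takes essentially the same route as the paper: both rest on an operator-level intertwining identity proved from $\partial_x(\vartheta_x+c)=(\vartheta_x+c+1)\partial_x$ and the commutativity of polynomials in $\vartheta_x$. The only cosmetic difference is that the paper states the identity in the conjugated form $H(a_i)\,x\,{}_NP_{N-1}=x\,{}_N\hat P_{N-1}\,H(a_i)$ (using $(\vartheta_x+a_i)x=x(\vartheta_x+a_i+1)$), which is equivalent to your $\hat P\,H(a_i)=(\vartheta_x+a_i+1)P$.
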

\begin{proof}
We set
\begin{gather}\label{hatP}
{}_N\hat P_{N-1} ={}_N P_{N-1} \big|_{ a_{i,1} \mapsto a_{i,1}+\eta^{-1} }
\end{gather}
for $1\leq i \leq N$. Then we have the following relation for $1\leq i \leq N$:
\begin{gather}\label{eq3.5}
H(a_i) x {}_{N}P_{N-1}=x {}_{N}\hat P_{N-1} H(a_i).
\end{gather}
Hence
$\psi \in \mathcal{S} \left(\begin{smallmatrix}{\bm{a}} \\ {\bm{b}}\end{smallmatrix}\right)$ implies
$H(a_i) \psi \in \mathcal{S}\left(\begin{smallmatrix}{\bm{a}}+{\bf e}_i \\ {\bm{b}}\end{smallmatrix}\right)$. Similarly, \eqref{LoB} can be proved.
\end{proof}
\begin{Lemma}\label{RiSS}
Let $S$ be a formal solution of $\operatorname{Ri}({}_NP_{N-1})(S)=0$. Then
\begin{gather}\label{RiSSeqa}
\operatorname{Ri}\big({}_N\hat P_{N-1}\big)\big(\partial_x \log (x S +a_i) +S\big)=0,\qquad i=1,2,\dots,N,\\ \label{RiSSeqb}
\operatorname{Ri}({}_N P_{N-1})\big(\partial_x \log \big(x \hat S +b_j\big) + \hat S\big)=0, \qquad j=1,2,\dots,N-1
\end{gather}
hold. Here we set \eqref{hatP} and
\begin{gather}\label{hatS}
\hat S = S \big|_{ b_{j,1} \mapsto b_{j,1}+\eta^{-1} }
\end{gather}
for $j=1,2,\dots, N-1$.
\end{Lemma}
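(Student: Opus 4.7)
The strategy is to deduce both Riccati identities from the ladder-operator intertwining relation \eqref{eq3.5} (together with its $B(b_j)$-analogue) applied to a WKB-form exponential. The essential observation is that $\operatorname{Ri}(P)(S)=0$ is equivalent, by the very definition of $\operatorname{Ri}$, to the annihilation $P\psi=0$ of the formal exponential $\psi=\exp\bigl(\int S\,{\rm d}x\bigr)$; hence any intertwining at the operator level immediately yields a corresponding intertwining of Riccati solutions via the chain rule.

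For \eqref{RiSSeqa}, I set $\psi=\exp\bigl(\int S\,{\rm d}x\bigr)$, so that the hypothesis becomes ${}_NP_{N-1}\psi=0$. Multiplying \eqref{eq3.5} on the right by $\psi$ gives
\begin{gather*}
x\cdot{}_N\hat P_{N-1}(H(a_i)\psi)=H(a_i)\bigl(x\cdot{}_NP_{N-1}\psi\bigr)=0,
\end{gather*}
and hence ${}_N\hat P_{N-1}(H(a_i)\psi)=0$. Since $\partial_x\psi=S\psi$, I then compute $H(a_i)\psi=(\vartheta_x+a_i)\psi=(xS+a_i)\psi=\exp\bigl(\int[\partial_x\log(xS+a_i)+S]\,{\rm d}x\bigr)$, and reading off the logarithmic derivative of this WKB-form solution of ${}_N\hat P_{N-1}$ immediately produces \eqref{RiSSeqa}.

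For \eqref{RiSSeqb}, applying the substitution \eqref{hatS} throughout $\operatorname{Ri}({}_NP_{N-1})(S)=0$ shows that $\hat S$ is a Riccati solution of the shifted operator ${}_N\tilde P_{N-1}$ obtained from ${}_NP_{N-1}$ by $b_{j,1}\mapsto b_{j,1}+\eta^{-1}$; equivalently, $\hat\psi=\exp\bigl(\int\hat S\,{\rm d}x\bigr)$ is annihilated by ${}_N\tilde P_{N-1}$. The $B(b_j)$-analogue of \eqref{eq3.5}, namely $B(b_j)\cdot x\cdot{}_N\tilde P_{N-1}=x\cdot{}_NP_{N-1}\cdot B(b_j)$, then yields ${}_NP_{N-1}(B(b_j)\hat\psi)=0$. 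Computing $B(b_j)\hat\psi=(\vartheta_x+b_j)\hat\psi=(x\hat S+b_j)\hat\psi$ and passing again to the logarithmic derivative yields \eqref{RiSSeqb}.

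The only non-routine point is the verification of the $B(b_j)$-intertwining, which is not displayed explicitly in the proof of the preceding lemma. I expect this to be the main (though minor) obstacle: one reduces it to the commutation identity $(\vartheta_x+c)x=x(\vartheta_x+c+1)$ together with the mutual commutativity of the various factors $\vartheta_x+c$, which permits the factor $\vartheta_x+b_j$ to be pulled from the rightmost position of $x\cdot{}_N\tilde P_{N-1}$ to the leftmost position of $x\cdot{}_NP_{N-1}$. Once that intertwining is established, the rest of the argument is mechanical.
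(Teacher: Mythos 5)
Your overall strategy is the same as the paper's: both rest on the intertwining relation \eqref{eq3.5} together with the identification of $\operatorname{Ri}(P)(S)=0$ with the annihilation of the formal exponential $\psi=\exp\bigl(\int S\,{\rm d}x\bigr)$. The paper merely performs the bookkeeping at the level of $\operatorname{Ri}$ itself, computing $\operatorname{Ri}\bigl(H(a_i)\,x\,{}_NP_{N-1}\bigr)(S)$ by the Leibniz-type formula of Lemma~\ref{decLR} and observing that it is a combination of $\operatorname{Ri}({}_NP_{N-1})(S)$ and its $x$-derivatives, hence zero; your manipulation of $\psi$ encodes exactly the same computation, and your derivation of \eqref{RiSSeqa} is sound.

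The one step that would fail as written is the $B(b_j)$-intertwining you conjecture. Writing $Q={}_NP_{N-1}\big|_{b_{j,1}\mapsto b_{j,1}+\eta^{-1}}$ (your shifted operator), the identity $B(b_j)\,x\,Q=x\,{}_NP_{N-1}\,B(b_j)$ is false: one checks that $x\,{}_NP_{N-1}\,B(b_j)=x(\vartheta_x+b_j)Q$ while $B(b_j)\,x\,Q=x(\vartheta_x+b_j+1)Q$, so the two sides differ by $x\,Q$. The correct relation carries no factor of $x$ at all,
\begin{gather*}
{}_NP_{N-1}\,B(b_j)=B(b_j)\,Q,
\end{gather*}
which follows from $\partial_x(\vartheta_x+b_j)=(\vartheta_x+b_j+1)\partial_x$, the commutativity of the factors $\vartheta_x+c$, and hence
\begin{gather*}
\prod_{j'=1}^{N-1}(\vartheta_x+b_{j'})\,\partial_x\,(\vartheta_x+b_j)=(\vartheta_x+b_j)\,(\vartheta_x+b_j+1)\prod_{\substack{j'=1\\ j'\neq j}}^{N-1}(\vartheta_x+b_{j'})\,\partial_x,
\end{gather*}
together with $\prod_i(\vartheta_x+a_i)(\vartheta_x+b_j)=(\vartheta_x+b_j)\prod_i(\vartheta_x+a_i)$. (This asymmetry with \eqref{eq3.5} reflects the fact that $B(b_j)$ lowers the parameter, mapping $\mathcal{S}\left(\begin{smallmatrix}\bm a\\ \bm b+\mathbf{e}_j\end{smallmatrix}\right)$ to $\mathcal{S}\left(\begin{smallmatrix}\bm a\\ \bm b\end{smallmatrix}\right)$, whereas $H(a_i)$ raises it.) With this corrected identity your argument for \eqref{RiSSeqb} closes exactly as you describe: the substitution \eqref{hatS} shows $\operatorname{Ri}(Q)(\hat S)=0$, so ${}_NP_{N-1}\bigl(B(b_j)\hat\psi\bigr)=B(b_j)\,Q\hat\psi=0$ for $\hat\psi=\exp\bigl(\int\hat S\,{\rm d}x\bigr)$, and $B(b_j)\hat\psi=\bigl(x\hat S+b_j\bigr)\hat\psi$ yields \eqref{RiSSeqb}.
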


\begin{proof}
We fix the index $i$, $1\leq i\leq N$. By the definition of $\operatorname{Ri}(\ast)(\star)$, we have the following relations:
\begin{gather}\label{ritemp11}
\operatorname{Ri}\big({}_N\hat P_{N-1}\big)\big(\partial_x \log(xS+a_i)+S\big)=\frac{1}{xS+a_i}{\rm e}^{-\int S\,{\rm d}x} {}_N\hat P_{N-1} (xS+a_i){\rm e}^{\int S\,{\rm d}x},\\ \label{ritemp12}
\operatorname{Ri}\big({}_N \hat P_{N-1} H(a_i)\big)(S)={\rm e}^{-\int S\,{\rm d}x} {}_N\hat P_{N-1} (xS+a_i){\rm e}^{\int S\,{\rm d}x}.
\end{gather}
Combining \eqref{ritemp11} and \eqref{ritemp12}, we have
\begin{gather}\label{ritemp2}
\operatorname{Ri}\big({}_N\hat P_{N-1}\big)\big(\partial_x \log(xS+a_i)+S\big)=\frac{1}{xS+a_i}
\operatorname{Ri}\big({}_N \hat P_{N-1} H(a_i)\big)(S).
\end{gather}
From \eqref{eq3.5}, we get
\begin{gather}\label{ritemp3}
\operatorname{Ri}(H(a_i) x {}_N P_{N-1})(S)=\operatorname{Ri}\big(x {}_N \hat P_{N-1} H(a_i)\big)(S).
\end{gather}
By using a similar argument employed in the proof of Lemma \ref{decLR}, the left-hand side of \eqref{ritemp3} can be written in the form:
\begin{gather}\label{ritemp1}
\operatorname{Ri}(H(a_i) x {}_N P_{N-1})(S)= \sum_{k\geq 0} \frac{\eta^{-k}}{k!}\operatorname{Ri}\big(\wick{\partial^k_\zeta (x \eta \zeta+a_i)}\big)(S)\partial^k_x \operatorname{Ri}(x {}_N P_{N-1})(S).
\end{gather}
Then the right-hand side of \eqref{ritemp1} is equal to
\begin{gather*}
(xS+a_i)x \operatorname{Ri}({}_N P_{N-1})(S)+x\big(\operatorname{Ri}({}_N P_{N-1})(S)+x \partial_x \operatorname{Ri}({}_N P_{N-1})(S)\big).
\end{gather*}
Here we used the relation
\begin{gather*}
 \operatorname{Ri}(x {}_N P_{N-1})(S)=x\operatorname{Ri}({}_N P_{N-1})(S).
\end{gather*}
Since $\operatorname{Ri}({}_N P_{N-1})(S)=0$, the right-hand side of \eqref{ritemp1} vanishes. Combining \eqref{ritemp2}, \eqref{ritemp3} and~\eqref{ritemp1}, we obtain~\eqref{RiSSeqa}. The relation~\eqref{RiSSeqb} can be proved as well.
\end{proof}

\begin{Lemma}\label{sd}
The formal solution $S$ satisfies the following relations:
\begin{gather*}
\Delta_{a_{i,1}}S =\partial_x \log ( x S +a_i), \qquad i=1,2,\dots,N,\\
\Delta_{b_{j,1}}S =-\partial_x \log \big( x \hat S +b_j\big),\qquad j=1,2,\dots,N-1,
\end{gather*}
where we set \eqref{hatS} and
\begin{gather*}
\Delta_{\rho} S=S\big|_{ \rho\, \mapsto \rho +\eta^{-1} }-S
\end{gather*}
for $\rho= a_{1,1}, a_{2,1},\dots, a_{N,1}, b_{1,1},b_{2,1},\dots, b_{N-1,1}$.
\end{Lemma}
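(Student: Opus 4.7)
The plan is to recognize both sides of each identity as formal WKB-type solutions of the \emph{same} Riccati equation sharing the same characteristic root, and then invoke the uniqueness of such a formal solution once the leading term is fixed.

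For the first relation, fix $i\in\{1,2,\dots,N\}$ and set $\tilde S = S|_{a_{i,1}\mapsto a_{i,1}+\eta^{-1}}$. Since $\operatorname{Ri}({}_NP_{N-1})(S)=0$ holds identically in $a_{i,1}$, substituting $a_{i,1}\mapsto a_{i,1}+\eta^{-1}$ immediately gives $\operatorname{Ri}({}_N\hat P_{N-1})(\tilde S)=0$, where ${}_N\hat P_{N-1}$ is the operator from \eqref{hatP}. Equation~\eqref{RiSSeqa} of Lemma~\ref{RiSS} says that $S+\partial_x\log(xS+a_i)$ is also a formal solution of $\operatorname{Ri}({}_N\hat P_{N-1})(\cdot)=0$. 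I would then compare their coefficients of $\eta^{1}$: the $\eta^{1}$-coefficient of $\tilde S$ remains $S_{-1}$, because the perturbation $\eta^{-1}$ of $a_{i,1}$ only produces $O(1)$ and lower-order corrections; while $xS+a_i=\eta(xS_{-1}+a_{i,1})+O(1)$ implies $\partial_x\log(xS+a_i)=O(1)$, so $S+\partial_x\log(xS+a_i)$ also has leading term $\eta S_{-1}$. Uniqueness of the formal Riccati solution associated with a prescribed characteristic root then forces $\tilde S=S+\partial_x\log(xS+a_i)$, which is the desired identity.

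The second relation is obtained analogously, but using the unshifted operator: by \eqref{RiSSeqb}, the expression $\hat S+\partial_x\log(x\hat S+b_j)$ solves $\operatorname{Ri}({}_NP_{N-1})(\cdot)=0$, as does $S$ itself. Their $\eta^{1}$-terms both coincide with $S_{-1}$, since the substitution $b_{j,1}\mapsto b_{j,1}+\eta^{-1}$ leaves the leading characteristic root intact and $\partial_x\log(x\hat S+b_j)$ contributes only at order $O(1)$. Hence $S=\hat S+\partial_x\log(x\hat S+b_j)$, yielding $\Delta_{b_{j,1}}S=-\partial_x\log(x\hat S+b_j)$.

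The main obstacle, and the step I would check most carefully, is the uniqueness invocation: one needs to verify that the two candidate WKB-type solutions being equated are built from the same branch of $\sigma_0({}_N P_{N-1})(x,S_{-1})=0$, so that the standard recursive procedure for determining $S_0,S_1,\dots$ produces identical series. Once the matching of the $\eta^{1}$-coefficients is established and one works outside the turning points, this is routine, but it is the genuine content of the argument; the Riccati-equation manipulations of Lemma~\ref{RiSS} do the bookkeeping, and uniqueness converts them into the stated shift relations.
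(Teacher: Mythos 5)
Your proposal is correct and follows essentially the same route as the paper: both arguments use Lemma~\ref{RiSS} to exhibit $S+\partial_x\log(xS+a_i)$ (resp.\ $\hat S+\partial_x\log(x\hat S+b_j)$) as a formal solution of the shifted (resp.\ unshifted) Riccati equation, check that its leading term is $\eta S_{-1}$ on the same branch as the shifted series, and conclude by uniqueness of the recursively determined WKB solution. The point you flag as the one to check most carefully --- that both candidates sit on the same characteristic root --- is exactly the step the paper settles by inspecting the leading term of the right-hand side of its expansion.
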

\begin{Remark}
Note that we distinguish $\rho$ from $\varrho$.
\end{Remark}

\begin{Remark}
The difference operator $\Delta_{\rho}$ in $\rho$ by $\eta^{-1}$can be written in the following form by using a formal differential operator of infinite order:
\begin{gather*}
\Delta_{\rho} ={\rm e}^{\eta^{-1}\partial_\rho }-1.
\end{gather*}
\end{Remark}
\begin{proof}
We denote $\hat{\psi}= H({a_i}) \psi$, which belongs to $\mathcal{S} \left(\begin{smallmatrix}{\bm a}+{\bf e}_i\\ {\bm b}\end{smallmatrix}\right)$. Then
\begin{gather*}
\partial_x \log \hat{\psi} = \partial_x\log (xS+a_i)+S
\end{gather*}
holds. It follows from Lemma \ref{RiSS} that $\hat{S}$ satisfies the equation obtained from $\operatorname{Ri}({}_NP_{N-1})(S)=0$ by replacing $a_i$ by
$a_i+1$. Hence we have
\begin{gather}
\partial_x\log (xS+a_i)+S =\frac{(\eta S_{-1}+S_0+\cdots)+\partial_x(\eta S_{-1}+S_0+\cdots)}{x(\eta S_{-1}+S_0+\cdots)+a_i}\nonumber\\
\hphantom{\partial_x\log (xS+a_i)+S =}{} +(\eta S_{-1}+S_0+\cdots).\label{tempa1}
\end{gather}
The leading term with respect to $\eta^{-1}$ of the right-hand side of \eqref{tempa1} equals $S_{-1}$.
We can choose the leading term of $\hat{S}$ as $S_{-1}\big|_{a_{i,1}\mapsto a_{i,1}+\eta^{-1} }$.
Thus, we conclude that $\hat{S}=S\big|_{a_{i,1}\mapsto a_{i,1}+\eta^{-1} }$.
Similarly, Lemma~\ref{sd} can be proved for $b_{1,1}, b_{2,1},\dots, b_{N-1,1}$.
\end{proof}

\begin{Lemma}\label{ddeqlem}
Let $(\rho,\rho_0)$ denote one of $(a_{1,1}, a_{1,0}),\dots ,(b_{N-1,1}, b_{N-1,0})$. The Voros coefficient $V_\varrho^{(j,k)}$, $j<k$, satisfies the following differential-difference equations:
\begin{gather}\label{ddeq}
\partial_{\rho} \Delta_{\rho}V_{\varrho}^{(j,k)}= f^{(j,k)}_{\rho}+\Delta_\rho g^{(j,k)}_\rho.
\end{gather}
Here $f^{(j,k)}_{\rho}$ are given as follows and here $g^{(j,k)}_\rho$ is a linear function of~$\eta$:
\begin{itemize}\itemsep0pt
\item[{\rm{(i)}}] If $\varrho=0$,
\begin{gather*}
2f_{a_{i,1}}^{(j,k)} =\frac{\eta}{1+a_i-b_k}-\frac{\eta}{1+a_i-b_j}, \qquad k\not=N,\\
2f_{a_{i,1}}^{(j,N)}=\frac{\eta}{a_i}-\frac{\eta}{1+a_i-b_j},\\
2f_{b_{m,1}}^{(j,k)}=\frac{\eta}{1+b_m-b_j}-\frac{\eta}{1+b_m-b_k},\qquad k\not=N, m\not=j,k,\\
2f_{b_{j,1}}^{(j,k)}= -\frac{\eta}{1+b_j-b_k}-\frac{\eta}{b_j-1}-\sum_{\substack{\ell=1 \\ \ell \not=j}}^{N-1} \frac{\eta}{b_{j}-b_{\ell}}-\sum_{\ell=1}^N \frac{\eta}{a_\ell-b_j},\qquad k\not=N,\\
2f_{b_{k,1}}^{(j,k)}=\frac{\eta}{1+b_k-b_j}+\frac{\eta}{b_k-1}+\sum_{\substack{\ell=1 \\ \ell \not=k}}^{N-1} \frac{\eta}{b_{k}-b_{\ell}}+\sum_{\ell=1}^N \frac{\eta}{a_{\ell}-b_k},\qquad k\not=N,\\
2f_{b_{m,1}}^{(j,N)}=\frac{\eta}{1+b_m-b_j}-\frac{\eta}{b_m}, \qquad m\not= j, \\
2f_{b_{j,1}}^{(j,N)}=\sum_{\ell=1}^N\frac{\eta}{b_j-a_\ell}-\frac{\eta}{b_j}-\frac{\eta}{b_j-1}-\sum_{\substack{\ell=1 \\ \ell \not=j}}^{N-1}\frac{\eta}{b_j-b_\ell},
\end{gather*}
$i=1,2,\dots,N$ and $m=1,2,\dots, N-1$.
\item[{\rm{(ii)}}] If $\varrho=\infty$,
\begin{gather*}
2f_{a_{i,1}}^{(j,k)} =\frac{\eta}{a_i-a_k}-\frac{\eta}{a_i-a_j},\qquad i\not=j,k,\\
2f_{a_{j,1}}^{(j,k)}=-\frac{\eta }{a_k-a_j}-\frac{\eta }{a_j}+\sum_{\substack{\ell=1 \\ \ell \not=j}}^{N} \frac{\eta }{1+a_j-a_\ell}-\sum_{\ell=1}^{N-1} \frac{\eta }{1+a_j-b_\ell},
\\
2f_{a_{k,1}}^{(j,k)}=\frac{\eta }{a_j-a_k}+\frac{\eta }{a_k}-\sum_{\substack{\ell=1 \\ \ell \not=k}}^{N} \frac{\eta }{1+a_k-a_\ell}+\sum_{\ell=1}^{N-1} \frac{\eta }{1+a_k-b_\ell},
\\
2f_{b_{m,1}}^{(j,k)}=\frac{\eta}{b_m-a_j}-\frac{\eta}{b_m-a_k},
\end{gather*}
$i=1,2,\dots,N$ and $m=1,2,\dots, N-1$.
\end{itemize}
\end{Lemma}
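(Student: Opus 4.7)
The plan is to start from the contour-integral form
\[V^{(j,k)}_\varrho=\frac{1}{2}\int_{\gamma_{j,k}}(S-\eta S_{-1}-S_0)\,dx\]
on the Riemann surface $\Sigma$, apply the difference operator $\Delta_\rho$, and use Lemma~\ref{sd} to recognise $\Delta_\rho S$ as a total $x$-derivative: $\Delta_{a_{i,1}}S=\partial_x\log(xS+a_i)$ and $\Delta_{b_{m,1}}S=-\partial_x\log(x\hat S+b_m)$. The contribution of this piece to $\Delta_\rho V^{(j,k)}_\varrho$ therefore reduces, by the fundamental theorem of calculus, to the boundary difference of a logarithm evaluated at $\varrho$ on the $k$-th minus the $j$-th sheet; these boundary values I would read off from the local expansions of the characteristic roots supplied by Lemma~\ref{Sexpandreg}. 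Applying $\partial_\rho$ and using $\partial_{a_{i,1}}=\eta\partial_{a_i}$, $\partial_{b_{m,1}}=\eta\partial_{b_m}$ converts each boundary logarithm into the rational expressions making up $f^{(j,k)}_\rho$: for instance, at $\varrho=0$ with $k\neq N$, Lemma~\ref{Sexpandreg} yields $xS^{(0,k)}|_{x=0}=1-b_k$ so $\log(xS^{(0,k)}+a_i)|_{x=0}=\log(1+a_i-b_k)$, and $\partial_{a_{i,1}}$ produces the term $\eta/(1+a_i-b_k)$ in $2f^{(j,k)}_{a_{i,1}}$.

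For the remaining subtracted regularisation term in $\Delta_\rho V^{(j,k)}_\varrho$, I would use the commutation $[\partial_\rho,\Delta_\rho]=0$ (both are built from $\partial_\rho$, since $\Delta_\rho=e^{\eta^{-1}\partial_\rho}-1$) to rewrite
\[-\frac{1}{2}\partial_\rho\int_{\gamma_{j,k}}\Delta_\rho(\eta S_{-1}+S_0)\,dx=\Delta_\rho\!\left(-\frac{1}{2}\int_{\gamma_{j,k}}\partial_\rho(\eta S_{-1}+S_0)\,dx\right)=\Delta_\rho g^{(j,k)}_\rho,\]
so that $g^{(j,k)}_\rho=-\frac{1}{2}\int_{\gamma_{j,k}}\partial_\rho(\eta S_{-1}+S_0)\,dx$ is automatically linear in $\eta$, since $\eta S_{-1}+S_0$ is. Combined with the previous paragraph, this gives the announced decomposition $\partial_\rho\Delta_\rho V^{(j,k)}_\varrho=f^{(j,k)}_\rho+\Delta_\rho g^{(j,k)}_\rho$.

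The hard part is the ``tied'' cases, in which $\rho$ is coupled to one of the sheets in play, e.g.\ $\rho=b_{j,1}$ evaluated on the $j$-th sheet at $\varrho=0$. Here Lemma~\ref{Sexpandreg} with $b_j\mapsto b_j+1$ shows that $\hat S^{(0,j)}$ has leading behaviour $-b_j/x$, so $x\hat S^{(0,j)}+b_j$ vanishes at $x=0$ and the naive boundary logarithm diverges; the divergence is exactly cancelled by the corresponding divergence in $\int_{\gamma_{j,k}}\Delta_{b_{j,1}}(\eta S^{(0,j)}_{-1})\,dx$, and the finite remainder is the logarithm of the coefficient of $x$ in $x\hat S^{(0,j)}+b_j$, i.e.\ the $x$-constant term of $\hat S^{(0,j)}$. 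Substituting Lemma~\ref{Sexpandreg} (with $b_j\mapsto b_j+1$) identifies this constant as $\prod_{m=1}^{N}(a_m-b_j)\big/\bigl((b_j-1)\prod_{m\neq j}(b_j-b_m)\bigr)$, and differentiating its logarithm in $b_{j,1}$ supplies precisely the extra summands $-\eta/(b_j-1)$, $-\sum_{\ell\neq j}\eta/(b_j-b_\ell)$ and $-\sum_\ell\eta/(a_\ell-b_j)$ appearing in $2f^{(j,k)}_{b_{j,1}}$. The remaining tied cases, and the analysis at $\varrho=\infty$ (where the subleading $1/x^2$ coefficient in Lemma~\ref{Sexpandreg} plays the same role), are handled identically, yielding the corresponding extra-summand structure in $f^{(j,k)}_{b_{k,1}}$, $f^{(j,k)}_{a_{j,1}}$ and $f^{(j,k)}_{a_{k,1}}$.
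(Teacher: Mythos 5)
Your overall strategy coincides with the paper's: apply $\Delta_\rho$ to the contour-integral form of $V^{(j,k)}_\varrho$, use Lemma~\ref{sd} to turn $\int_{\gamma_{j,k}}\Delta_\rho S\,{\rm d}x$ into a boundary logarithm, read the boundary values off Lemma~\ref{Sexpandreg}, and differentiate in $\rho$ to produce $f^{(j,k)}_\rho$; your computation of the $f$'s, including the finite-part extraction in the tied cases, matches the paper's. The gap is in the $g$-term. You set $g^{(j,k)}_\rho=-\frac12\int_{\gamma_{j,k}}\partial_\rho(\eta S_{-1}+S_0)\,{\rm d}x$ and deduce linearity in $\eta$ from this formula. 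But by Lemma~\ref{Sexpandreg} the residue of $\eta S_{-1}+S_0$ at $x=0$ on the $\ell$-th sheet ($\ell\le N-1$) is $1-b_\ell$, so $\partial_{b_{j,1}}(\eta S_{-1}+S_0)$ and $\partial_{b_{k,1}}(\eta S_{-1}+S_0)$ have residue $-\eta\neq 0$ at the endpoint of $\gamma_{j,k}$: your integral for $g$ diverges precisely in the tied cases $\rho=b_{j,1},b_{k,1}$ (and likewise $\rho=a_{j,1},a_{k,1}$ at $\varrho=\infty$), which are exactly the cases carrying the extra summands. The same divergence invalidates the interchange $\partial_\rho\int\Delta_\rho(\cdot)=\Delta_\rho\int\partial_\rho(\cdot)$ there, and once you reshuffle the $\log x$ divergences between the two pieces to make $f$ finite, you no longer have an argument that the leftover is of the form $\Delta_\rho g$ with $g$ well defined and linear in $\eta$ --- which is the substance of the lemma, since that structure is what Lemma~\ref{homovoros} and the Bernoulli-polynomial resummation in Theorem~\ref{thm1} rely on.

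The paper closes this hole with the Iwaki--Koike regularization: it introduces the $x$- and $\eta$-independent constants $r^{(j,k)}_\ell$ of \eqref{koike} (the discrepancy between a true primitive of $S^{(0,k)}_\ell-S^{(0,j)}_\ell$ and the termwise Laurent primitive $\int^x$), writes $\Delta_\rho V^{(j,k)}_0=\frac12\big(c_0^{(j,k)}-\eta\Delta_\rho r^{(j,k)}_{-1}-\Delta_\rho r^{(j,k)}_0\big)$ as in \eqref{temp1v0jk}, and then --- this is the step you are missing --- extracts from \eqref{427} the identities $d_0^{(j,k)}=\partial_\rho r^{(j,k)}_{-1}$ and $d_1^{(j,k)}=\frac12\partial_\rho^2 r^{(j,k)}_{-1}+\partial_\rho r^{(j,k)}_0$ of \eqref{djk}, which express the otherwise inaccessible constants $\partial_\rho r^{(j,k)}_{-1}$, $\partial_\rho r^{(j,k)}_0$ through the coefficients $d_m^{(j,k)}$ of the explicitly computed boundary logarithm $c_0^{(j,k)}$. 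This yields $g^{(j,k)}_\rho=-\frac12\big(\eta d_0^{(j,k)}+d_1^{(j,k)}-\frac12\partial_\rho d_0^{(j,k)}\big)$, manifestly finite and linear in $\eta$, uniformly in the tied and untied cases. You need this identification, or an equivalent systematic finite-part bookkeeping, to turn your sketch into a proof.
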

The system of differential-difference equations \eqref{ddeq} satisfy the compatibility conditions:
\begin{gather*}
\partial_{\tau} \Delta_{\tau}\partial_{\rho} \Delta_{\rho}V_\varrho^{(j,k)}=\partial_{\rho} \Delta_{\rho}\partial_{\tau} \Delta_{\tau}V_\varrho^{(j,k)},\qquad \forall \rho, \tau \in \{ a_{1,1}, \dots ,a_{N,1}, b_{1,1}, \dots,b_{N-1,1} \}.
\end{gather*}

\begin{proof}
We consider the integral
\begin{gather*}
\int_{{\tilde{\gamma}}_{j,k}} \Delta_\rho S\,{\rm d}x
\end{gather*}
for $ \rho \in \{ a_{1,1}, \dots ,a_{N,1}, b_{1,1}, \dots,b_{N-1,1} \}$.
Here the path $\tilde \gamma_{j,k}$ of integration is taken as follows: Let~$x$ be a point near the origin and $x^{(m)}$ the point on the $m$-th sheet of Riemann surface $\Sigma$ such that $\pi(x^{(m)})=x$. We take a path on $\Sigma$ starting from~$x^{(j)}$, going to and detouring $\tau$ counterclockwise on the base space and back to $x^{(k)}$ and denote this by $\tilde \gamma_{j,k}$.
\begin{figure}[h]
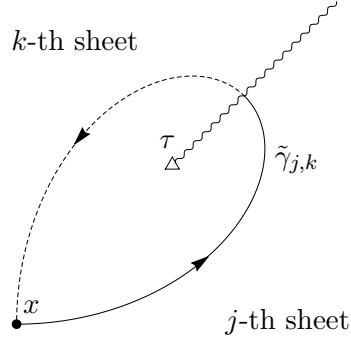
\centering
\begin{overpic}[scale=0.50]{path.pdf}
\put(0,85){$k$-th sheet}
\put(65,0){$j$-th sheet}
\put(45,55){$\tau$}
\put(3.5,5){$x$}
\put(80,50){$\tilde \gamma_{j,k}$}
\end{overpic}

\caption{The path $\tilde \gamma_{j,k}$.}
\end{figure}
It follows from Lemma \ref{Sexpandreg} that this integral can be written in the form
\begin{gather*}
c^{(j,k)}_{-1} \log x+c_0^{(j,k)}+O(x)
\end{gather*}
with some constant series $c^{(j,k)}_{-1}$ and $c^{(j,k)}_{0}$ of $\eta^{-1}$. Here we use the notation $O(x)$ in the same sense as in Lemma \ref{Sexpandreg}.
Let $d_{m}^{(j,k)}$ denote the coefficient of $\eta^{-m}$ of $c_0^{(j,k)}$:
\begin{gather}
c_0^{(j,k)}=\sum _{m=0}^\infty d_{m}^{(j,k)} \eta^{-m}.\label{ddef}
\end{gather}
On the other hand, by using Lemma~\ref{sd}, we have
\begin{gather}\label{DetlaSint}
\int_{{\tilde{\gamma}}_{j,k}} \Delta_\rho S\,{\rm d}x=
\begin{cases}
 \displaystyle \log \left( \frac{x S^{(0,k)}+a_i}{x S^{(0,j)}+a_i} \right),& \rho=a_{i,1}, \ 1\leq i \leq N, \\
 \displaystyle -\log \left( \frac{x \hat S^{(0,k)}+b_m}{x \hat S^{(0,j)}+b_m} \right), & \rho=b_{m,1}, \ 1\leq m \leq N-1.
 \end{cases}
\end{gather}
Expanding the right-hand side of~\eqref{DetlaSint} with respect to~$x$, we obtain the coefficient $c_0^{(j,k)}$.
Moreover, Lemma~\ref{Sexpandreg} implies that $c_0^{(j,k)}$ is written as the logarithm of a product of several linear functions in~$\rho$, whose coefficients of~$\rho$ are~$-1$ or~$1$.

Let $F_{\ell}^{(m)}$ be a primitive function of $S^{(0,m)}_{\ell}$ with respect to $x$ and let $\int^{x} {\rm d}x$ denote the termwise integration in~$x$ of the Laurent expansion of the integrand at $x=0$, i.e.,
\begin{gather*}
\int^{x} \left(\frac{p_{-1}}{x}+p_0+p_1x+\cdots\right) {\rm d}x= p_{-1}\log x+p_0 x +\frac{1}{2}p_1x^2+\cdots.
\end{gather*}
We set
\begin{gather}\label{koike}
r^{(j,k)}_\ell=F^{(k)}_{\ell}(x)-F^{(j)}_{\ell}(x)-\left(\int^{x} S^{(0,k)}_{\ell}\,{\rm d}x -\int^{x} S^{(0,j)}_{\ell} \,{\rm d}x \right).
\end{gather}
Note that $r^{(j,k)}_\ell$ does not depend on $x$ and $\eta$.
We let apply $\Delta_\rho$ on both sides of~\eqref{koike}. Then we have
\begin{gather}\label{tempdeltasr}
\int_{\tilde\gamma_{j,k}} \Delta_\rho S_{\ell} \,{\rm d}x= \int^{x} \Delta_\rho S^{(0,k)}_{\ell}\,{\rm d}x - \int^{x} \Delta_\rho S^{(0,j)}_{\ell}\,{\rm d}x + \Delta_\rho r_\ell^{(j,k)}.
\end{gather}
Since $\Delta_\rho={\rm e}^{\eta^{-1}\partial_\rho}-1$, the right most term of~\eqref{tempdeltasr} can be written in the form
\begin{gather*}
\Delta_\rho r_\ell^{(j,k)} =\left(\eta^{-1}\partial_{\rho} +\frac{1}{2} \eta^{-2}\partial_{\rho}^2+\cdots \right) r_\ell^{(j,k)}.
\end{gather*}
Multiplying both members of~\eqref{tempdeltasr} by $\eta^{-\ell}$ and summing them up in~$\ell$, we have
\begin{gather}\label{427}
\int_{\tilde\gamma_{j,k}} \Delta_\rho S\,{\rm d}x = \sum_{\ell=-1}^\infty \eta^{-\ell} \left(\int^{x} \Delta_\rho S^{(0,k)}_{\ell}\,{\rm d}x - \int^{x} \Delta_\rho S^{(0,j)}_{\ell}\,{\rm d}x + \Delta_\rho r_\ell^{(j,k)}\right).
\end{gather}
The constant term of the right-hand side of~\eqref{427} at $x=0$
is equal to
\begin{gather}
 \eta \left(\eta^{-1} \partial_{\rho} +\frac{1}{2} \eta^{-2}\partial_{\rho}^2 +\cdots \right) r_{-1}^{(j,k)}+\left(\eta^{-1}\partial_{\rho} +\frac{1}{2} \eta^{-2}\partial_{\rho}^2+\cdots \right) r_0^{(j,k)}+\cdots\nonumber\\
\qquad{}=\partial_{\rho}r_{-1}^{(j,k)}+ \left( \frac{1}{2} \partial_{\rho}^2r_{-1}^{(j,k)} +\partial_{\rho} r_0^{(j,k)} \right)\eta^{-1}+\cdots.\label{22}
\end{gather}
Comparing the coefficients of $\eta^{-m}$, $m=0,1$, of \eqref{ddef} and the right-hand side of \eqref{22}, we have
\begin{gather}
d_0^{(j,k)} = \partial _{\rho}r^{(j,k)}_{-1},\nonumber\\
d_1^{(j,k)} = \frac{1}{2} \partial_{\rho}^2r_{-1}^{(j,k)} +\partial_{\rho} r_0^{(j,k)}.\label{djk}
\end{gather}
By the definition of $V^{(j,k)}_0$, we have
\begin{gather}\label{tempv0jk}
\Delta_\rho V^{(j,k)}_0=\frac{1}{2} \lim_{x \to 0} \left(\int_{\tilde\gamma_{j,k}} \Delta_\rho S\,{\rm d}x-\eta \int_{\tilde\gamma_{j,k}} \Delta_\rho S_{-1} \,{\rm d}x-\int_{\tilde\gamma_{j,k}} \Delta_\rho S_0 \,{\rm d}x\right).
\end{gather}
By the definition of $c_0^{(j,k)}$ and $r^{(j,k)}_{\ell}$, $\ell=-1,0$, the right-hand side of \eqref{tempv0jk} is written in the form:
\begin{gather}\label{temp1v0jk}
\Delta_\rho V^{(j,k)}_0=\frac{1}{2} \left( c_0^{(j,k)} -\eta \Delta_\rho r^{(j,k)}_{-1} -\Delta_\rho r^{(j,k)}_{0} \right).
\end{gather}
Differentiating \eqref{temp1v0jk} with respect to $\rho$ and using \eqref{djk}, we have
\begin{gather*}
\partial_\rho \Delta_\rho V^{(j,k)}_0=\frac{1}{2}\left( \partial_\rho c_0^{(j,k)} - \Delta_\rho \left( \eta d_0^{(j,k)}+d_1^{(j,k)}-\frac{1}{2}\partial_\rho d_0^{(j,k)} \right) \right).
\end{gather*}
As we remarked above, $c_0^{(j,k)}$ is written as the logarithm of a product of several linear functions in $\rho$.
Setting
\[
g_\rho^{(j,k)}=-\frac{1}{2}\left( \eta d_0^{(j,k)}+d_1^{(j,k)}-\frac{1}{2}\partial_\rho d_0^{(j,k)} \right),
\] we have the lemma for $\varrho=0$. Similarly, we get the differential-difference equations for Voros coefficients at the infinity.
\end{proof}

\begin{Remark}
If $N=3$, we have
\begin{gather*}
2\partial_{a_{i,1}}\Delta_{a_{i,1}}V_0^{(1, 2)}
 =-\frac{\eta }{a_i-b_1+1}+\frac{\eta }{a_i-b_2+1}\\
\qquad{}
+\Delta_{a_{i,1}}\left(-\eta \log \left(\frac{a_{i,1}-b_{2,1}}{a_{i,1}-b_{1,1}}\right)+\frac{2 a_{i,0}-2 b_{1,0}+1}{2 a_{i,1}-2 b_{1,1}}-\frac{2 a_{i,0}-2b_{2,0}+1}{2 a_{i,1}-2 b_{2,1}}\right),
\\
2\partial_{a_{i,1}}\Delta_{a_{i,1}}V_0^{(j, 3)} =\frac{\eta }{a_i-b_j+1}+\frac{\eta }{a_i}\\
\qquad{}
+\Delta_{a_{i,1}}\left(-\eta\log \left(\frac{a_{i,1}}{a_{i,1}-b_{j,1}}\right)-\frac{b_{j,0}-1}{ a_{i,1}-b_{j,1}}+\frac{\left(1-2 a_{i,0}\right) b_{j,1}}{2a_{i,1}(a_{i,1}-b_{j,1})} \right),
\\
2\partial_{b_{m,1}}\Delta_{b_{m,1}}V_0^{(1, 2)}\\
\qquad{}
 =(-1)^{m-1}\left( \sum_{i=1}^3 \frac{\eta }{b_m-a_i}-\sum_{\ell \in \{1,2\} \setminus \{m\}}\left( \frac{\eta }{b_m-1}-\frac{\eta }{b_m-b_\ell}-\frac{\eta }{b_m-b_\ell+1}\right)\right)\\
\qquad\quad{}
+\Delta_{b_{m,1}}(-1)^{m-1}\Bigg({-}\eta \log \left(\frac{\left(a_{1, 1}-b_{m, 1}\right) \left(a_{2, 1}-b_{m, 1}\right)\left(a_{3, 1}-b_{m, 1}\right)}{b_{m, 1}
 \left(b_{1, 1}-b_{2, 1}\right){}^2}\right)\\
\qquad\quad{}
-\frac{1}{2} \left(\sum_{i=1}^3\frac{2( a_{i, 0}- b_{m, 0})+1}{a_{i, 1}-b_{m, 1}}+\frac{3-2 b_{m, 0}}{b_{m, 1}}-\frac{4\left(b_{1, 0}-b_{2, 0}\right)}{b_{1, 1}-b_{2, 1}} \right)\Bigg),
\\
2\partial_{b_{m,1}}\Delta_{b_{m,1}}V_0^{(j, 3)} =\frac{\eta }{b_m-b_j+1}-\frac{\eta }{b_m}\\
\qquad{} +\Delta_{b_{m,1}}(-1)^{m-1}\left(-\eta\log\left(1-\frac{b_{j,1}}{b_{m,1}}\right)-\frac{ b_{j,0}-1}{b_{j, 1}-b_{m, 1}}
+\frac{ (2b_{m,0}-1)b_{j,1}}{2 b_{m, 1} \left(b_{j, 1}-b_{m, 1}\right)}\right),
\\
2\partial_{b_{m,1}}\Delta_{b_{m,1}}V_0^{(m, 3)} =\sum_{i=1}^3\frac{\eta }{b_m-a_i}
-\frac{\eta }{b_m-1}-\frac{\eta }{b_m}-\frac{\eta }{b_m-b_j}\\
\qquad{} +\Delta_{b_{m,1}}\Bigg({-}\eta \log \left(\frac{\left(a_{1, 1}-b_{m, 1}\right) \left(a_{2, 1}-b_{m, 1}\right)\left(a_{3, 1}-b_{m, 1}\right)}{b_{m, 1}^2
 \left(b_{m, 1}-b_{j, 1}\right)}\right)\\
\qquad{} -\frac{1}{2} \left(\sum_{i=1}^3\frac{2( a_{i, 0}- b_{m, 0})+1}{a_{i, 1}-b_{m, 1}}+\frac{4(1- b_{m, 0})}{b_{m, 1}}+\frac{2( b_{j, 0}-
 b_{m, 0})+1}{b_{m, 1}-b_{j, 1}}\right)\Bigg)
\end{gather*}
for $i=1,2,3$, $m=1,2$ and $j \in \{1,2\} \setminus \{m\}$.
\end{Remark}

The system of differential-difference equations given in Lemma~\ref{ddeqlem} characterizes $V_\varrho^{(j,k)}$, for we have the following lemma:

\begin{Lemma}\label{homovoros}
Let ${W}=\sum_{n=-1}^\infty w_n \eta^{-n}$ be a formal solution of the differential-difference equation
\begin{gather}\label{seizi}
\partial_{\rho} \Delta_{\rho}W=0
\end{gather}
for $\rho= a_{1,1},\dots, a_{N,1}, b_{1,1}, \dots,b_{N-1,1}$ and
suppose that $w_n$ is a homogeneous function of deg\-ree~$(-n)$ with respect to $a_{1,1},\dots, a_{N,1}, b_{1,1}, \dots,b_{N-1,1}$ and $w_{-1}=w_{0}=0$.
Then, we have
\begin{gather*}
W=0.
\end{gather*}
\end{Lemma}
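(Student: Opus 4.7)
The plan is to show $w_n=0$ by induction on $n$, reading off a recursion from the $\eta^{-1}$-expansion of~\eqref{seizi} and then killing each coefficient via the homogeneity hypothesis. First I would use $\Delta_\rho = \mathrm{e}^{\eta^{-1}\partial_\rho}-1=\sum_{k\geq 1}\eta^{-k}\partial_\rho^k/k!$ to rewrite $\partial_\rho\Delta_\rho W=0$ coefficient-wise in $\eta^{-1}$. Substituting $W=\sum_{n\geq -1}w_n\eta^{-n}$ and comparing the coefficient of $\eta^{-m}$ for each $m\geq 0$ and each $\rho\in\{a_{1,1},\dots,a_{N,1},b_{1,1},\dots,b_{N-1,1}\}$ yields the recursion
\[
\sum_{k=1}^{m+1}\frac{1}{k!}\,\partial_\rho^{k+1}w_{m-k}=0.
\]

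Next I would induct on $n\geq 1$, taking the given $w_{-1}=w_0=0$ as the base. Assuming $w_{-1}=\dots=w_{n-1}=0$ and applying the recursion with $m=n+1$, every term with $k\geq 2$ vanishes and only the $k=1$ term survives, leaving $\partial_\rho^2 w_n=0$ for every $\rho$ in the parameter list. Consequently $w_n$ is at most affine in each $_1$-parameter separately, so it is a multilinear polynomial
\[
w_n=\sum_{S\subseteq\{1,\dots,2N-1\}}c_S\prod_{i\in S}\rho_i,
\]
where $\rho_1,\dots,\rho_{2N-1}$ enumerate $a_{1,1},\dots,b_{N-1,1}$ and the coefficients $c_S$ depend only on the remaining data (the $_0$-parameters, $x$, etc.).

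Finally, invoking homogeneity of degree $-n$ in the $_1$-parameters, each monomial $\prod_{i\in S}\rho_i$ has nonnegative total degree $|S|$, whereas $-n<0$; hence every $c_S$ must vanish, giving $w_n=0$. This closes the induction and proves $W=0$. The step requiring the most care is the passage ``$\partial_\rho^2 w_n=0$ in each $\rho$ implies $w_n$ is a genuine polynomial of multi-degree at most one,'' which relies on $w_n$ being rational (or at least meromorphic) in the $_1$-parameters: a one-variable rational function with vanishing second derivative is automatically a polynomial of degree at most one, and iterating across the $2N-1$ variables yields multi-linearity. The role of the hypotheses $w_{-1}=w_0=0$ is precisely that degrees $+1$ and $0$ do admit nonzero multilinear (respectively constant) solutions of $\partial_\rho^2=0$, while every strictly negative degree forces such a polynomial to vanish.
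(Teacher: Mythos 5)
Your proposal is correct and follows essentially the same route as the paper: expand $\Delta_\rho={\rm e}^{\eta^{-1}\partial_\rho}-1$, compare coefficients of $\eta^{-m}$ to get the recursion $\sum_{n=-1}^{m-1}\frac{1}{(m-n)!}\partial_\rho^{\,m-n+1}w_n=0$, deduce $\partial_\rho^2 w_n=0$ recursively from $w_{-1}=w_0=0$, and conclude $w_n=0$ from homogeneity of negative degree. Your extra discussion of multilinearity just makes explicit the final step the paper leaves implicit (and the rationality caveat is not even needed, since $\partial_\rho^2 w_n=0$ in each variable separately already forces affine dependence for any sufficiently smooth function).
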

\begin{proof}
By the definition of $\Delta_{\rho}$, we have
\begin{gather*}
\partial_{\rho} \Delta_{\rho} W
=\sum_{\ell=0}^\infty\left(\sum_{n=-1}^{\ell-1}\frac{1}{(\ell-n)!}\partial_{\rho}^{\ell-n+1} w_n \right)\eta^{-\ell}
\end{gather*}
for all $\rho \in \{ a_{1,1},\dots, a_{N,1}, b_{1,1}, \dots,b_{N-1,1} \}$. Comparing the coefficients for both sides of~\eqref{seizi} with respect to the powers of $\eta$, we have
\begin{gather*}
\sum_{n=-1}^{\ell-1}\frac{1}{(\ell-n)!}\partial_{\rho}^{\ell-n+1} w_n=0,\qquad n=0,1,2,\dots.
\end{gather*}
Since $w_{-1}=w_{0}=0$, we have $\partial_\rho^2 w_n=0$ for $n=1,2,\dots$ recursively.
By the assumption, $w_n$~is a~homogeneous function of degree $(-n)$ with respect to $a_{1,1},\dots, a_{N,1}, b_{1,1}, \dots,b_{N-1,1}$. Therefore we obtain
\begin{gather*}
w_{n} =0, \qquad n=1, 2,\dots.\tag*{\qed}
\end{gather*}\renewcommand{\qed}{}
\end{proof}

\subsection{Voros coefficients}
We have the explicit forms of the Voros coefficients $V_{\varrho}^{(j,k)}$ at $\varrho$, $\varrho=0,\infty$.
\begin{Theorem}\label{thm1}
Let $\varrho$ be $0$ or $\infty$. If there exists a simple turning point of type $(j,k)$, $j<k$, of~\eqref{nFn-1eq}, then the Voros coefficients $V_{\varrho}^{(j,k)}$ are written in the form:
\begin{gather*}
\frac{1}{2} \sum_{\ell=2}^{\infty} \frac{(-1)^{\ell+1} \eta^{1-\ell}}{\ell(\ell-1)} V^{(j,k)}_{\varrho,\ell},
\end{gather*}
where
\begin{gather*}
V^{(j,k)}_{0,\ell}=
\frac{B_\ell(b_{j,0}-1)}{b_{j,1}^{\ell-1}}
-\frac{B_\ell(b_{k,0}-1)}{b_{k,1}^{\ell-1}}+\sum_{i=1}^N \frac{B_\ell(b_{k,0}-a_{i,0})}{(b_{k,1}-a_{i,1})^{\ell-1}}\\
\hphantom{V^{(j,k)}_{0,\ell}=}{}
-\sum_{i=1}^N \frac{B_\ell(b_{j,0}-a_{i,0})}{(b_{j,1}-a_{i,1})^{\ell-1}}+\sum_{\substack{m=1 \\ m\not=j}}^{N-1} \frac{B_\ell(b_{j,0}-b_{m,0})}{(b_{j,1}-b_{m,1})^{\ell-1}}-\sum_{\substack{m=1 \\ m\not=k}}^{N-1} \frac{B_\ell(b_{k,0}-b_{m,0})}{(b_{k,1}-b_{m,1})^{\ell-1}},
\qquad k\not= N,\\
V^{(j,N)}_{0,\ell}=\frac{B_\ell(b_{j,0}-1)}{b_{j,1}^{\ell-1}}-\sum_{i=1}^N \frac{B_\ell(a_{i,0})}{a_{i,1}^{\ell-1}}
-\sum_{i=1}^N \frac{B_\ell(b_{j,0}-a_{i,0})}{(b_{j,1}-a_{i,1})^{\ell-1}}\\
\hphantom{V^{(j,N)}_{0,\ell}=}{}
+\sum_{\substack{m=1 \\ m\not=j}}^{N-1}\frac{B_\ell(b_{j,0}-b_{m,0})}{(b_{j,1}-b_{m,1})^{\ell-1}}+\sum_{m=1}^{N-1} \frac{B_\ell(b_{m,0})}{b_{m,1}^{\ell-1}},
\\
V^{(j,k)}_{\infty,\ell}=\sum_{m=1}^{N-1} \left( \frac{B_\ell(b_{m,0}-a_{k,0})}{(b_{m,1}-a_{k,1})^{\ell-1}}-\frac{B_\ell(b_{m,0}-a_{j,0})}{(b_{m,1}-a_{j,1})^{\ell-1}} \right)\\
\hphantom{V^{(j,k)}_{\infty,\ell}=}{}
+\left(\frac{B_\ell(a_{j,0})}{a_{j,1}^{\ell-1}}+\sum_{\substack{m=1 \\ m\not=j}}^{N-1}\frac{B_\ell(a_{i,0}-a_{j,0})}{(a_{i,1}-a_{j,1})^{\ell-1}}\right)-\left(\frac{B_\ell(a_{k,0})}{a_{k,1}^{\ell-1}}+\sum_{\substack{m=1 \\ m\not=k}}^{N-1}\frac{B_\ell(a_{i,0}-a_{k,0})}{(a_{i,1}-a_{k,1})^{\ell-1}} \right).
\end{gather*}
Here $B_\ell(t)$ denotes the $\ell$-th Bernoulli polynomial defined by
\begin{gather*}
\frac{x {\rm e}^{xt}}{{\rm e}^x-1}=\sum_{\ell=0}^\infty \frac{B_\ell(t)}{\ell!}x^\ell.
\end{gather*}
\end{Theorem}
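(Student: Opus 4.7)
The plan is to prove the theorem indirectly using the uniqueness supplied by Lemma~\ref{homovoros}. Let $\tilde V_\varrho^{(j,k)}$ denote the explicit formal series on the right-hand side of the claimed identity, and set $W = V_\varrho^{(j,k)} - \tilde V_\varrho^{(j,k)} = \sum_{n \geq -1} w_n \eta^{-n}$. I will show (a) $\tilde V_\varrho^{(j,k)}$ satisfies the same system of differential-difference equations as $V_\varrho^{(j,k)}$ given in Lemma~\ref{ddeqlem} (with the same functions $f_\rho^{(j,k)}$ and $g_\rho^{(j,k)}$), and (b) $w_{-1} = w_0 = 0$ while each $w_n$ ($n \geq 1$) is homogeneous of degree $-n$ in the parameters $a_{i,1}, b_{j,1}$. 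Lemma~\ref{homovoros} then forces $W = 0$.

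For (b), the structural properties are visible at a glance. The sum defining $\tilde V_\varrho^{(j,k)}$ starts at $\ell = 2$, so its $\eta^1$ and $\eta^0$ parts both vanish; the corresponding coefficients of $V_\varrho^{(j,k)}$ also vanish because $S_{\mathrm{odd},\leq 0}^{(\varrho,j,k)}$ is precisely defined to subtract off the $\eta$ and $\eta^0$ parts of $S_{\mathrm{odd}}^{(\varrho,j,k)}$. Homogeneity follows term by term: each summand $B_\ell(c_0)/c_1^{\ell-1}$ paired with $\eta^{1-\ell}$ contributes to $\eta^{-(\ell-1)}$ a function which is homogeneous of degree $-(\ell-1)$ in the $(a_{i,1},b_{j,1})$-variables (since $c_1$ is linear in them while $c_0$ depends only on zeroth-order parameters). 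Homogeneity of $V_\varrho^{(j,k)}$ itself is a consequence of the scaling behavior of the principal symbol.

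For (a), the main task is to apply $\partial_\rho \Delta_\rho$ to each building block $\frac{B_\ell(c_0)}{c_1^{\ell-1}}$ and sum over $\ell$. Using the basic Bernoulli identities $B_\ell'(t) = \ell B_{\ell-1}(t)$ and $B_\ell(t+1) - B_\ell(t) = \ell t^{\ell-1}$, together with the generating function
\begin{gather*}
\sum_{\ell=0}^{\infty} \frac{B_\ell(t)}{\ell!} z^\ell = \frac{z \mathrm{e}^{zt}}{\mathrm{e}^z - 1},
\end{gather*}
one can evaluate the resulting series in closed form. In particular, the commuted sum
\begin{gather*}
\frac{1}{2}\sum_{\ell=2}^{\infty} \frac{(-1)^{\ell+1} \eta^{1-\ell}}{\ell(\ell-1)} \, \partial_\rho \Delta_\rho \!\left(\frac{B_\ell(c_0)}{c_1^{\ell-1}}\right)
\end{gather*}
splits naturally into a purely rational-in-$\rho$ piece (matching the $\eta/(\text{linear})$ terms of $f_\rho^{(j,k)}$) and a piece that is itself of the form $\Delta_\rho(\cdots)$ (to be absorbed into $g_\rho^{(j,k)}$). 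Assembling these contributions over the six (resp.\ four) groups of summands in the formulas for $V_{0,\ell}^{(j,k)}$ (resp.\ $V_{\infty,\ell}^{(j,k)}$), one reconstructs the full right-hand sides of Lemma~\ref{ddeqlem}.

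The main obstacle will be step (a): the bookkeeping for the many terms in Lemma~\ref{ddeqlem}, especially the boundary cases $k = N$ at the origin and the asymmetric cases $\rho = a_{j,1}, a_{k,1}$ at infinity, where extra contributions coming from the singular logarithmic factor $1/x$ in the leading term of $S^{(\varrho,j)}$ produce the single-index terms $\eta/a_i$, $\eta/b_j$, etc. Careful pairing of the Bernoulli-polynomial building blocks with their $\partial_\rho \Delta_\rho$ images via the generating function is the decisive calculation; once it is verified for a representative block, the remaining cases follow by the same algebraic template, and Lemma~\ref{homovoros} delivers the theorem.
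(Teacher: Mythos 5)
Your proposal is correct and rests on the same two pillars as the paper's own proof---the differential-difference system of Lemma~\ref{ddeqlem} and the uniqueness statement of Lemma~\ref{homovoros}---but runs the computation in the opposite direction. The paper solves \eqref{ddeq} forward: it rewrites $f_\rho^{(j,k)}$ as a sum of shifted terms ${\rm e}^{\epsilon v\eta^{-1}\partial_\rho}u^{-1}$, formally inverts $\partial_\rho\Delta_\rho=\big({\rm e}^{\eta^{-1}\partial_\rho}-1\big)\partial_\rho$, and reads off the Bernoulli polynomials from the generating function $z{\rm e}^{vz}/({\rm e}^z-1)$; Lemma~\ref{homovoros} then fixes the integration ``constants'' $C_\rho(\check\rho)$. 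You instead verify that the candidate series satisfies \eqref{ddeq} and subtract; the two calculations are the same generating-function identity read in opposite directions, so nothing essential is lost. The one point you should make explicit is the matching of the $\Delta_\rho g_\rho^{(j,k)}$ terms: a priori your verification yields $\partial_\rho\Delta_\rho\tilde V_\varrho^{(j,k)}=f_\rho^{(j,k)}+\Delta_\rho\tilde g$ for some linear-in-$\eta$ function $\tilde g$ that need not visibly coincide with the $g_\rho^{(j,k)}$ of Lemma~\ref{ddeqlem}, so $W=V_\varrho^{(j,k)}-\tilde V_\varrho^{(j,k)}$ satisfies $\partial_\rho\Delta_\rho W=\Delta_\rho\big(g_\rho^{(j,k)}-\tilde g\big)$ rather than the homogeneous equation required by Lemma~\ref{homovoros}. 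This is repaired by your own normalization $w_{-1}=w_0=0$: comparing the coefficients of $\eta^{0}$ and $\eta^{-1}$ in $\partial_\rho\Delta_\rho W=\Delta_\rho h$ with $h$ linear in $\eta$ forces $\partial_\rho h=0$, hence $\Delta_\rho h=0$, and the uniqueness lemma then gives $W=0$. With that step spelled out (and granting, as the paper also does only implicitly, the homogeneity of the coefficients of $V_\varrho^{(j,k)}$ in $a_{1,1},\dots,b_{N-1,1}$), your argument is complete.
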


\begin{proof}
To obtain the explicit forms of $V_\varrho^{(j,k)}$, we will solve~\eqref{ddeq}.
We may write $f_\rho^{(j,k)}$ in the form
\begin{gather}\label{fcondi}
f_\rho^{(j,k)} = \frac{1}{2}\sum_m \frac{1}{ u^{(j,k)}_{\rho, m}+ v^{(j,k)}_{\rho, m} \eta^{-1} },
\end{gather}
where $u^{(j,k)}_{\rho, m}$ and $v^{(j,k)}_{\rho, m} $ being linear functions of~$\rho$ and~$\rho_0$, respectively, which are independent of~$\eta$.
The explicit forms of $u^{(j,k)}_{\rho, m}$ and $v^{(j,k)}_{\rho, m} $ are given by Lemma~\ref{ddeqlem}.
The coefficient of~$\rho$ of~$u^{(j,k)}_{\rho, m}$ is equal to~$1$ or~$-1$, which denotes $ \epsilon^{(j,k)}_{\rho, m}$ $(=\pm1)$.
The right-hand side of~\eqref{fcondi} is obtained from~$u^{(j,k)}_{\rho, m}$ by the shift:
\begin{gather*}
\frac{1}{2}\sum_m \frac{1}{ u^{(j,k)}_{\rho, m}+ v^{(j,k)}_{\rho, m} \eta^{-1} }=\frac{1}{2}\sum_m \frac{1}{u^{(j,k)}_{\rho, m}}\bigg|_{\rho \mapsto \rho+\epsilon^{(j,k)}_{\rho, m} v^{(j,k)}_{\rho, m}\eta^{-1} }.
\end{gather*}
Thus \eqref{ddeq} is written in the following form:
\begin{gather*}
\partial_\rho \Delta_\rho V_\varrho^{(j,k)}=\frac{1}{2}\sum_m {\rm e}^{ \epsilon^{(j,k)}_{\rho, m} v_{\rho, m}^{(j,k)} \eta^{-1}\partial_{\rho}}\frac{1}{u_{\rho, m}^{(j,k)}}+\Delta_{\rho}g^{(j,k)}_\rho.
\end{gather*}
Hence \eqref{ddeq} can be solved by using a formal differential operator of infinite order:
\begin{align*}
V_\varrho^{(j,k)}&=\frac{1}{2}\sum_m \frac{{\rm e}^{\epsilon^{(j,k)}_{\rho, m} v_{\rho, m}^{(j,k)} \eta^{-1}\partial_{\rho}}}{ \big({\rm e}^{ \eta^{-1} \partial_\rho} -1\big) } \partial_{\rho}^{-1}\frac{1}{u_{\rho, m}^{(j,k)}}+\partial_\rho^{-1} g^{(j,k)}_\rho\\
&=\frac{1}{2}\sum_{\ell=0}^\infty \sum_m \frac{B_\ell\big(\epsilon^{(j,k)}_{\rho, m} v_{\rho, m}^{(j,k)}\big)}{\ell!}\eta^{1-\ell} \partial_{\rho}^{\ell-2} \frac{1}{u_{\rho, m}^{(j,k)}}+\partial_\rho^{-1} g^{(j,k)}_\rho.
\end{align*}
Using Lemma \ref{homovoros}, we have
\begin{gather*}
V_\varrho^{(j,k)}=\frac{1}{2}\sum_{\ell=2}^\infty \sum_m \frac{B_\ell\big(\epsilon^{(j,k)}_{\rho, m} v_{\rho, m}^{(j,k)}\big)}{\ell!}\eta^{1-\ell} \partial_{\rho}^{\ell-2} \frac{1}{u_{\rho, m}^{(j,k)}}+C_\rho(\check\rho),
\end{gather*}
where $C_\rho(\check\rho)$ is an arbitrary function of $\{a_{1,1}, \dots, a_{N,1}, b_{1,1}, \dots ,b_{N-1,1} \} \setminus \{\rho\}$.
We repeat the above discussion for every $\rho \in \{a_{1,1}, \dots, a_{N,1}, b_{1,1}, \dots ,b_{N-1,1} \}$. Then we can determine $C_\rho(\check\rho)$ by adjustment. Thus we have the theorem.
\end{proof}

\begin{Remark}
If $N=3$, we have
\begin{gather*}
V^{(1,2)}_{0,\ell}=\sum_{\substack{i=1,2,3\\j=1,2}} (-1)^j \frac{B_\ell(b_{j,0}-a_{i,0})}{(b_{j,1}-a_{i,1})^{\ell-1}}\\
\hphantom{V^{(1,2)}_{0,\ell}=}{}
+\frac{B_\ell(b_{1,0}-1)}{b_{1,1}^{\ell-1}}-\frac{B_\ell(b_{2,0}-1)}{b_{2,1}^{\ell-1}}+\frac{ B_\ell(b_{1,0}-b_{2,0}+1)+B_\ell(b_{1,0}-b_{2,0})}{(b_{1,1}-b_{2,1})^{\ell-1}},
\\
V^{(1,3)}_{0,\ell} =-\sum_{i=1,2,3} \frac{B_\ell(b_{1,0}-a_{i,0})}{(b_{1,1}-a_{i,1})^{\ell-1}}-\sum_{i=1,2,3} \frac{B_\ell(a_{i,0})}{a_{i,1}^{\ell-1}}\\
\hphantom{V^{(1,3)}_{0,\ell} =}{}
+\frac{B_\ell(b_{1,0})+B_\ell(b_{1,0}-1)}{b_{1,1}^{\ell-1}}+\frac{B_\ell(b_{2,0})}{b_{2,1}^{\ell-1}}+\frac{B_\ell(b_{1,0}-b_{2,0})}{(b_{1,1}-b_{2,1})^{\ell-1}},
\\
V^{(2,3)}_{0,\ell}=-\sum_{i=1,2,3} \frac{B_\ell(b_{2,0}-a_{i,0})}{(b_{2,1}-a_{i,1})^{\ell-1}}-\sum_{i=1,2,3} \frac{B_\ell(a_{i,0})}{a_{i,1}^{\ell-1}}\\
\hphantom{V^{(2,3)}_{0,\ell}=}{}
+\frac{B_\ell(b_{2,0})+B_\ell(b_{2,0}-1)}{b_{2,1}^{\ell-1}}+\frac{B_\ell(b_{1,0})}{b_{1,1}^{\ell-1}}+\frac{B_\ell(b_{2,0}-b_{1,0})}{(b_{2,1}-b_{1,1})^{\ell-1}},
\\
V^{(j,k)}_{\infty,\ell}=\sum_{m=1,2} \left( \frac{B_\ell(b_{m,0}-a_{k,0})}{(b_{m,1}-a_{k,1})^{\ell-1}}-\frac{B_\ell(b_{m,0}-a_{j,0})}{(b_{m,1}-a_{j,1})^{\ell-1}} \right)\\
\hphantom{V^{(j,k)}_{\infty,\ell}=}{}
+\left(\frac{B_\ell(a_{j,0})}{a_{j,1}^{\ell-1}}+\sum_{\substack{i=1 \\ i\not=j}}^{3}\frac{B_\ell(a_{i,0}-a_{j,0})}{(a_{i,1}-a_{j,1})^{\ell-1}}\right)-\left(\frac{B_\ell(a_{k,0})}{a_{k,1}^{\ell-1}}+\sum_{\substack{i=1 \\ i\not=k}}^{3}\frac{B_\ell(a_{i,0}-a_{k,0})}{(a_{i,1}-a_{k,1})^{\ell-1}} \right).
\end{gather*}
\end{Remark}

We consider the Borel summability of $V_\varrho^{(j,k)}$.
We set
\begin{gather*}
\tilde{V} (\kappa,\kappa_0)= \frac{1}{2}\sum_{\ell=2}^\infty \frac{(-1)^{\ell+1} \eta^{1-\ell}}{\ell(\ell-1)} \frac{B_\ell(\kappa_0)}{\kappa^{\ell-1}}.
\end{gather*}
As we saw in the preceding theorem, $V_\varrho^{(j,k)}$ are expressed as a finite sum of the formal series of the form $\tilde{V} (\kappa,\kappa_0)$ for suitable choice of~$\kappa$,~$\kappa_0$.
Hence it is sufficient to consider the Borel summability of $\tilde{V} (\kappa,\kappa_0)$. Its summability is well known: the formal series $\tilde{V} (\kappa,\kappa_0)$ of $\eta^{-1}$ is Borel summable if $\operatorname{Re}(\kappa)\neq 0$ and the Borel sum of $\tilde{V} (\kappa,\kappa_0)$ equals
\begin{gather*}
 \frac{1}{2}\log\left( \frac{\sqrt{2 \pi} (\kappa \eta)^{\kappa_0+\kappa\eta-\frac{1}{2}}}{{\rm e}^{\kappa \eta} \Gamma(\kappa_0+\kappa\eta)}\right), \qquad \operatorname{Re}(\kappa)>0,\\
\frac{1}{2} \log \left(\frac{(-\kappa \eta)^{\kappa_0+\kappa\eta-\frac{1}{2}}\Gamma\left(1-(\kappa_0+\kappa\eta) \right)}{ \sqrt{2 \pi} {\rm e}^{\kappa \eta}}\right), \qquad \operatorname{Re}(\kappa)<0,
\end{gather*}
(see \cite[Lemma 4.6]{ATT4}).
This expression is obtained by the Binet formula~\cite{EMOT}.
Thus, we have
\begin{Corollary}\label{vsd}
Let $\varrho$ be $0$ or $\infty$. Also let
\begin{gather*}
D(\kappa)=\{(a_{1,1},\dots,a_{N,1},b_{1,1},\dots,b_{N-1,1}) \,|\, \operatorname{Re}(\kappa)\not =0 \},
\end{gather*}
where $\kappa$ is a linear homogeneous function of $a_{1,1},\dots,a_{N,1}, b_{1,1},\dots,b_{N-1,1}$.
The Voros coefficient~$V_\varrho^{(j,k)}$ of type $(j,k)$, $j<k$, is Borel summable if
$(a_{1,1},\dots,a_{N,1},b_{1,1},\dots,b_{N-1,1})$ belongs to~$D^{(j,k)}_\varrho$. Here we set
\begin{gather*}
D^{(j,k)}_0= D(b_{j,1})\cap D(b_{k,1}) \cap \bigcap_{i=1}^N{D(b_{j,1}-a_{i,1})} \cap \bigcap_{\substack{m=1 \\ m\not=j}}^{N-1}{D(b_{j,1}-b_{m,1})}\\
\hphantom{D^{(j,k)}_0=}{}
\cap \bigcap_{i=1}^N{D(b_{k,1}-a_{i,1})}\cap \bigcap_{\substack{m=1 \\ m\not=k}}^{N-1}{D(b_{k,1}-b_{m,1})}, \qquad k\not= N,
\\
D^{(j,N)}_0= \bigcap_{i=1}^N{D(a_{i,1})} \cap \bigcap_{m=1}^{N-1} {D(b_{m,1})}\cap \bigcap_{i=1}^N{D(b_{j,1}-a_{i,1})} \cap \bigcap_{\substack{m=1 \\ m\not=j}}^{N-1}{D(b_{j,1}-b_{m,1})},
\\
D^{(j,k)}_\infty= D(a_{j,1}) \cap D(a_{k,1})\cap \bigcap_{m=1}^{N-1} {D(b_{m,1}-a_{k,1})}\cap \bigcap_{m=1}^{N-1} {D(b_{m,1}-a_{j,1})}\\
\hphantom{D^{(j,k)}_\infty=}{}
\cap \bigcap_{\substack{i=1 \\ i\not=j}}^{N} {D(a_{i,1}-a_{j,1})}\cap \bigcap_{\substack{i=1 \\ i\not=k}}^{N} {D(a_{i,1}-a_{k,1})} .
\end{gather*}
\end{Corollary}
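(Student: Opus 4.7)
The plan is to reduce the Borel summability of $V_\varrho^{(j,k)}$ to the known summability of the elementary building blocks $\tilde V(\kappa,\kappa_0)$. First, I invoke Theorem~\ref{thm1}: each $V_\varrho^{(j,k)}$ is displayed as
\[
V_\varrho^{(j,k)}=\frac{1}{2}\sum_{\ell=2}^\infty \frac{(-1)^{\ell+1}\eta^{1-\ell}}{\ell(\ell-1)} V_{\varrho,\ell}^{(j,k)},
\]
where $V_{\varrho,\ell}^{(j,k)}$ is itself a \emph{finite} $\mathbb{Z}$-linear combination of terms of the shape $B_\ell(\kappa_0)/\kappa^{\ell-1}$. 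Swapping the two finite summations (one inside $V_{\varrho,\ell}^{(j,k)}$ and one over $\ell$), one sees that $V_\varrho^{(j,k)}$ equals a finite $\mathbb{Z}$-linear combination of series $\tilde V(\kappa,\kappa_0)$, where the pairs $(\kappa,\kappa_0)$ are read off directly from the explicit formulas of Theorem~\ref{thm1}.

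Second, I apply the quoted Lemma~4.6 of~\cite{ATT4}: $\tilde V(\kappa,\kappa_0)$ is Borel summable as a formal series in $\eta^{-1}$ whenever $\operatorname{Re}(\kappa)\neq 0$, with Borel sum given by the explicit logarithm of a Gamma-function ratio displayed just above Corollary~\ref{vsd}. Since Borel summability is preserved under finite $\mathbb{C}$-linear combinations on the common summability domain, $V_\varrho^{(j,k)}$ is Borel summable on the intersection $\bigcap_{\kappa}D(\kappa)$, the intersection being over all linear forms $\kappa$ that actually occur in the reduction above.

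Third, I verify that this intersection is precisely $D_\varrho^{(j,k)}$ by index-tracking. For the case $\varrho=0$, $k\neq N$, inspection of the explicit formula for $V^{(j,k)}_{0,\ell}$ shows that the denominators $\kappa$ that appear are exactly $b_{j,1}$, $b_{k,1}$, $b_{j,1}-a_{i,1}$ (for $i=1,\dots,N$), $b_{k,1}-a_{i,1}$ (for $i=1,\dots,N$), $b_{j,1}-b_{m,1}$ (for $m\neq j$), and $b_{k,1}-b_{m,1}$ (for $m\neq k$), which matches the definition of $D^{(j,k)}_0$. The cases $\varrho=0$, $k=N$ and $\varrho=\infty$ are handled identically by reading off the denominators from the corresponding expressions for $V^{(j,N)}_{0,\ell}$ and $V^{(j,k)}_{\infty,\ell}$; one must take care that the term $B_\ell(a_{i,0})/a_{i,1}^{\ell-1}$ in $V^{(j,N)}_{0,\ell}$ contributes the conditions $\operatorname{Re}(a_{i,1})\neq 0$ and that $B_\ell(b_{m,0})/b_{m,1}^{\ell-1}$ contributes $\operatorname{Re}(b_{m,1})\neq 0$, accounting for the extra intersectands $\bigcap_i D(a_{i,1})\cap\bigcap_m D(b_{m,1})$ appearing in $D^{(j,N)}_0$.

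The proof is therefore essentially a bookkeeping verification on top of Lemma~4.6 of~\cite{ATT4} and Theorem~\ref{thm1}; the analytic content (coming ultimately from the Binet integral formula for $\log\Gamma$) is entirely contained in the cited lemma. The only point requiring care is ensuring that no denominator is overlooked in the reduction, in particular that each of the six (respectively five) listed groups of intersectands in $D_\varrho^{(j,k)}$ corresponds to a group of terms in the explicit formula of Theorem~\ref{thm1}.
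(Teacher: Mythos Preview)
Your proof is correct and follows essentially the same approach as the paper: decompose $V_\varrho^{(j,k)}$ via Theorem~\ref{thm1} into a finite sum of building blocks $\tilde V(\kappa,\kappa_0)$, invoke \cite[Lemma~4.6]{ATT4} for the Borel summability of each block when $\operatorname{Re}(\kappa)\neq 0$, and read off the relevant $\kappa$'s to obtain $D_\varrho^{(j,k)}$. Your write-up is in fact more explicit than the paper's, which leaves the index-tracking of the denominators to the reader.
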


\section{Concluding remarks and future problems}\label{section4}
We have obtained explicit forms of the Voros coefficients at the origin and at the infinity (Theo\-rem~\ref{thm1}). To give an explicit form of the Voros coefficient at $x = 1$ is our future problem.
The difficulty of the problem comes from the multiplicity of the characteristic exponents at $x=1$.

As in the case of the Gauss hypergeometric differential equation~\cite{AT} and of its confluent families~\cite{KoT,T}, we may get the formulas describing the parametric Stokes phenomena for WKB solutions of~\eqref{nFn-1eq} by using Corollary~\ref{vsd} under the assumption that their formal solutions are Borel summable in some region. The Borel summability of WKB solutions of our equation is also a future problem.

\subsection*{Acknowledgements}
The first author is supported by JSPS KAKENHI Grant No.~18K03385. The authors would like to thank the referees for their careful readings and detailed comments on this article.

\pdfbookmark[1]{References}{ref}
\LastPageEnding

\end{document}